\setlist{nolistsep}
\definecolor{cornellrot}{RGB}{179,27,27}
\newcommand{\circled}[1]{\tikz[baseline=-3.8pt]{\node[shape=circle,draw,inner sep=0.5pt] (char) {\scriptsize #1};}}
\newcommand*\rel@kern[1]{\kern#1\dimexpr\macc@kerna}
\newcommand*\widebar[1]{\begingroup\def\mathaccent##1##2{\rel@kern{0.8}\overline{\rel@kern{-0.8}\macc@nucleus\rel@kern{0.2}}\rel@kern{-0.2}}
\macc@depth\@ne\let\math@bgroup\@empty \let\math@egroup\macc@set@skewchar\mathsurround\z@ \frozen@everymath{\mathgroup\macc@group\relax}\macc@set@skewchar\relax\let\mathaccentV\macc@nested@a\macc@nested@a\relax111{#1}\endgroup}
\renewcommand{\Re}{\operatorname{Re}}
\renewcommand{\Im}{\operatorname{Im}}
\newcommand {\divides}               {\mathbin{|}}
\newcommand {\hooktwoheadrightarrow} {\hookrightarrow\hspace{-1.8ex} \rightarrow}
\renewcommand {\d} [1] {\ensuremath{\operatorname{d}\!{#1}}}
\DeclareMathOperator {\arcosh} {arcosh}
\DeclareMathOperator {\Aut}    {Aut}
\DeclareMathOperator {\bnd}    {bnd}
\DeclareMathOperator {\BS}     {BS}
\DeclareMathOperator {\card}   {card}
\DeclareMathOperator {\isom}   {Isom}
\DeclareMathOperator {\supp}   {supp}
\newcommand {\dhatt}  {d_{\widehat{\textit{T}}}\,}
\newcommand {\dhyp}   {d_{\mathbb{H}}}
\newcommand {\ddhyp}  {d_{\Gamma_{v}}}
\newcommand {\dtree}  {d_{T}}
\newcommand {\dwordS} {d_{S}}
\newcommand {\deucl}  {d_{\mathbb{R}^{2}}}
\newcommand {\set}  [2] {\ensuremath{\{\, #1 \,:\, #2 \,\}}}
\newcommand {\sset} [1] {\ensuremath{\{\, #1 \,\}}}
\newcommand {\pres} [2] {\ensuremath{\langle\, #1 \,:\, #2 \,\rangle}}
\newcommand {\sgp}  [1] {\ensuremath{\langle\, #1 \,\rangle}}
\newcommand {\nsgp} [1] {\ensuremath{\langle\hspace*{-1.6pt}\langle\, #1 \,\rangle\hspace*{-1.6pt}\rangle}}
\newtheoremstyle{graz}{10pt}{5pt}{\normalfont\it}{}{\normalfont\bfseries}{}{6pt}{\textbf{\thmname{#1}\thmnumber{ #2}\thmnote{ (#3)}}}
\theoremstyle{graz}
\newtheorem {rem} {Remark} [section]
\newtheorem {lem}  [rem] {Lemma}
\newtheorem {defn} [rem] {Definition}
\newtheorem {thm}  [rem] {Theorem}
\newtheorem {exa}  [rem] {Example}
  \DeclareSymbolFont{AMSb}{U}{msb}{m}{n}
  \DeclareSymbolFontAlphabet{\mathbb}{AMSb}}
\begin{document}

\allowdisplaybreaks


\title{Random walks on Baumslag--Solitar groups}

\author{
Johannes Cuno\thanks{Research supported by the Austrian Science Fund (FWF): W1230-N13 and P24028-N18, the Canada Research Chairs Program, and the European Research Council (ERC): No 725773 ``GroIsRan''.}~~and
Ecaterina Sava-Huss\thanks{Research supported by the Austrian Science Fund (FWF): J3575-N26.}
}

\date{Revised version --- November 15, 2017}


\maketitle

\begin{abstract}
\noindent 
We consider random walks on non-amenable Baumslag--Solitar groups $\BS(p,q)$ and describe their Poisson--Furstenberg boundary. The latter is a probabilistic model for the long-time behaviour of the random walk. In our situation, we identify it in terms of the space of ends of the Bass--Serre tree and the real line using Kaimanovich's strip criterion.
\end{abstract}


\section{Introduction}
\hypertarget{sec:introduction}{}

For any two non-zero integers $p$ and $q$ the Baumslag--Solitar group $\BS(p,q)$ is given by the presentation $\BS(p,q)=\pres{a,b}{ab^{p}=b^{q}a}$. These groups were introduced by Baumslag and Solitar in \cite{bs-1962}, who identified $\BS(2,3)$ as the first example of a two-generator one-relator non-Hopfian group. We consider random walks on $\BS(p,q)$. Each of them is driven by a probability measure $\mu$ whose support generates $\BS(p,q)$ as a semigroup. The random walk starts at the identity element and proceeds with independent $\mu$-distributed increments $X_{1},X_{2},\dotsc$ being multiplied from the right to the current state.

The Poisson--Furstenberg boundary was introduced by Furstenberg in \cite{furstenberg-1963} and \cite{furstenberg-1971}. It is a probabilistic model for the long-time behaviour of the random walk and simultaneously provides a way to represent all bounded harmonic functions on the state space. In \cite[Theorem~5.1]{kaimanovich-1991}, Kaimanovich considered random walks on $\BS(1,2)$. Under the assumption of finite first moment, he identified their Poisson--Furstenberg boundary geometrically. In particular, he showed that the latter is trivial if the random walk has no vertical drift, i.\,e.~the expected exponent sum of the increments with respect to the generator $a$ is equal to zero.

For random walks on non-amenable groups the situation is different. As long as they are irreducible, their Poisson--Furstenberg boundary can never be trivial. This motivates the present paper, in which we study random walks on non-amenable Baumslag--Solitar groups. It is organised as follows. In Section \ref{sec:bsgroups}, we discuss some algebraic and geometric properties of Baumslag--Solitar groups $\BS(p,q)$ with $1\leq p<q$. We explain how these groups can be understood through their projections to the Bass--Serre tree~$T$ and the hyperbolic plane $\mathbb{H}$. Afterwards, we recall the construction of the space of ends $\partial T$ and the hyperbolic boundary $\partial\mathbb{H}$, which contains the real line~$\mathbb{R}$ as a subset. These spaces shall later be used to associate a geometric boundary to $\BS(p,q)$. In Section \ref{sec:randomwalks}, we turn to random walks on groups. We outline some results about the Poisson--Furstenberg boundary and then state Kaimanovich's strip criterion, which is an important tool to identify this boundary geometrically.

In Section \ref{sec:identification}, we study random walks on $\BS(p,q)$ with finite first moment. We consider the pointwise projections of the random walk to $T$ and $\mathbb{H}$. If the random walk has negative vertical drift, then the projection to $\mathbb{H}$ converges almost surely to a random element in $\mathbb{R}$. For the projection to $T$, we do not need to distinguish between different vertical drifts; as soon as $1<p<q$, it converges almost surely to a random element in $\partial T$. We thus endow $\partial T$ (or even $\partial T\times\mathbb{R}$) with the Borel $\sigma$-algebra $\mathcal{B}_{\partial T}$ (or $\mathcal{B}_{\partial T\times\mathbb{R}}$) and the hitting measure $\nu_{\partial T}$ (or~$\nu_{\partial T\times\mathbb{R}}$). Finally, Kaimanovich's strip criterion shows that the resulting probability space is isomorphic to the Poisson--Furstenberg boundary.

Up to and including Section~\ref{sub:cttbothp}, we assume that the two non-zero integers $p$ and $q$ satisfy $1\leq p<q$. Then, we restrict ourselves to the non-amenable subcase $1<p<q$. In the \hyperlink{sec:appendix}{appendix}, we explain how to obtain similar results for the remaining non-amenable cases $1<p<-q$ and $1<p=|q|$. Our main result is the following.

\begin{thm}[``identification theorem'']\label{thm:identification}
Let $Z=(Z_{0},Z_{1},\dotsc)$ be a random walk on a non-amenable Baumslag--Solitar group $G=\BS(p,q)$ with $1<p<q$ and increments $X_{1},X_{2},\dotsc$ of finite first moment. Depending on the vertical drift $\delta$, we distinguish three cases: 
\begin{enumerate}
\item If $\delta>0$, then the Poisson--Furstenberg boundary is isomorphic to $(\partial T ,\mathcal{B}_{\partial T},\nu_{\partial T})$ endowed with the boundary map $\bnd_{\partial T}:\Omega\to\partial T$.
\item If $\delta<0$, then it is isomorphic to $(\partial T\times\mathbb{R},\mathcal{B}_{\partial T\times\mathbb{R}},\nu_{\partial T\times\mathbb{R}})$ endowed with $\bnd_{\partial T\times\mathbb{R}}:\Omega\to\partial T\times\mathbb{R}$.
\item If $\delta=0$ and $\ln(A_{X_{1}})$ has finite second moment and there is an $\varepsilon>0$ such that $\ln(1+|B_{X_{1}}|)$ has finite $(2+\varepsilon)$-th moment, then it is isomorphic to~$(\partial T ,\mathcal{B}_{\partial T},\nu_{\partial T})$ endowed with $\bnd_{\partial T}:\Omega\to\partial T$.
\end{enumerate}
\end{thm}

Note that the driftless case is a little more subtle and requires additional assumptions on the moments. Here, the terms $A_{g}$ and $B_{g}$ denote the imaginary and real part of the projection of an element $g\in G$ to the hyperbolic plane $\mathbb{H}$. The two assumptions are certainly satisfied if $X_{1}$ has finite $(2+\varepsilon)$-th moment. Further details can be found at the beginning of Section~\ref{sub:cttbothp}.


\paragraph*{Acknowledgements}

We would like to thank Wolfgang Woess for suggesting this problem to us and supporting us with references and ideas while the research was carried out. Moreover, we are grateful to Vadim Kaimanovich and the anonymous reviewer, both of whom made valuable suggestions that led to a substantial improvement of the present paper.


\section{Baumslag--Solitar groups}
\label{sec:bsgroups}


\subsection{Amenability of Baumslag--Solitar groups}
\label{sub:aobsg}

The structure of Baumslag--Solitar groups can be studied by means of HNN extensions. Indeed, $\BS(p,q)$ is precisely the HNN extension $\mathbb{Z}\ast_{\varphi}$ with isomorphism $\varphi:p\mathbb{Z}\to q\mathbb{Z}$ given by $\varphi(p):=q$. This allows us to use the respective machinery, such as Britton's lemma, see \cite{britton-1963}, which implies that a freely reduced non-empty word $w$ over the letters $a$ and $b$ and their formal inverses can only represent the identity element $1\in\BS(p,q)$ if it contains $ab^{r}a^{-1}$ with $p\divides r$ or $a^{-1}b^{r}a$ with $q\divides r$ as a subword. In particular, if neither $|p|=1$ nor $|q|=1$, then the elements $x:=a$ and $y:=bab^{-1}$ generate a non-abelian free subgroup and $\BS(p,q)$ is non-amenable. On the other hand, if $|p|=1$ or $|q|=1$, a simple calculation shows that the normal subgroup $\nsgp{b}\trianglelefteq\BS(p,q)$ is abelian with quotient isomorphic to $\mathbb{Z}$. In this case, $\BS(p,q)$ is solvable and therefore amenable. As we will address briefly in Section~\ref{sub:sratpfb}, the distinction between these two cases is of importance when working with random walks.


\subsection{Projection to the Bass--Serre tree}

Assume first that $1\leq p<q$. The Cayley graph $\Gamma$ of the group $G:=\BS(p,q)$ with respect to the standard generators $a$ and $b$ is the directed multigraph with vertex set $G$, edge set $G\times\sset{a,b}$, source function $s:G\times\sset{a,b}\to G$ given by $s(g,x):=g$, and target function $t:G\times\sset{a,b}\to G$ given by $t(g,x):=gx$. Every directed multigraph can be converted into a simple graph by ignoring the direction and the multiplicity of the edges and deleting the loops. For the purpose of this paper, it is sufficient to think of $\Gamma$ as a simple graph, and we shall tacitly do so.

\begin{defn}[``levels'']
\label{defn:levelmaps}
Let $\lambda:\sset{a,b}\to\mathbb{Z}$ be the map given by $\lambda(a):=1$ and $\lambda(b):=0$. It follows from von Dyck's theorem, see e.\,g.~\cite[\S 1.1.3]{cz-1993}, that this map can be uniquely extended to a group homomorphism $\lambda:G\to\mathbb{Z}$. We think of it as a level function.
\end{defn}

Consider the illustration of $\Gamma$ in Figure~\ref{fig:cayley}. Intuitively speaking, we may look at it from the side to see the associated Bass--Serre tree. Formally, let $B:=\sgp{b}\leq G$ and let $T$ be the graph with vertex set $G/B=\set{gB}{g\in G}$ and edge set $\set{\{\,gB,gaB\,\}}{g\in G}$. This graph is actually a tree; it is connected and, by Britton's lemma, it does not contain any cycle. We use the symbol $\pi_{T}$ to denote the canonical projection to the cosets, i.\,e.~the map $\pi_{T}:G\to G/B$ given by $\pi_{T}(g):=gB$.

\begin{rem}
\label{rem:neighbours}
Since $\lambda(b)=0$, the level function is well-defined on the vertices of $T$. It is not hard to see that every vertex of $T$ has exactly $p+q$ neighbours; $p$ of them are one level below and $q$ of them are one level above the vertex.
\end{rem}

\begin{figure}
\begin{center}
  \includegraphics{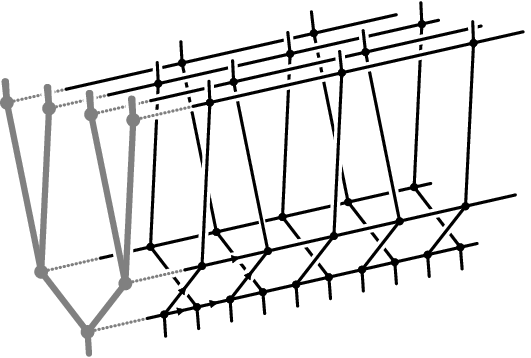}
  \begin{picture}(0,0)
    \put(-180.6,30){$a$}
    \put(-149,36.5){$a$}
    \put(-174,12){$b$}
    \put(-158,15.5){$b$}
    \put(-157,48.5){$b$}
  \end{picture}
\end{center}
\caption{The Cayley graph $\Gamma$ of $\BS(1,2)$ with respect to the standard generators $a$ and $b$.}
\label{fig:cayley}
\end{figure}


\subsection{Projection to the hyperbolic plane}
\label{sub:ptthp}

The second projection captures the information that is obtained by looking at $\Gamma$ from the front. It is convenient to describe it in terms of the hyperbolic plane. So, let $\mathbb{H}$ be the hyperbolic plane as per the Poincar\'e half-plane model, i.\,e.~$\mathbb{H}:=\set{z\in\mathbb{C}}{\Im(z)>0}$, endowed with the standard metric
\begin{equation*}
  \dhyp(z_{1},z_{2}):=\ln\left(\frac{|z_{1}-\widebar{z_{2}}|+|z_{1}-z_{2}|}{|z_{1}-\widebar{z_{2}}|-|z_{1}-z_{2}|}\right)=\arcosh\left(1+\frac{|z_{1}-z_{2}|^{2}}{2\cdot\Im(z_{1})\Im(z_{2})}\right)\,.
\end{equation*}
The isometry group $\isom(\mathbb{H})$ consists of all maps $\varphi:\mathbb{H}\to\mathbb{H}$ of the form 
\begin{equation*}
  \underbrace{\varphi(z)=\frac{\alpha z+\beta}{\gamma z+\delta}}_{\circled{1}}\quad\text{or}\quad\underbrace{\varphi(z)=\frac{\alpha\cdot(-\widebar{z})+\beta}{\gamma\cdot(-\widebar{z})+\delta}}_{\circled{2}}\quad\text{with}\quad\alpha,\beta,\gamma,\delta\in\mathbb{R}\,,\,\alpha\delta-\beta\gamma>0\,,
\end{equation*}
see e.\,g.~\cite[Theorem 7.4.1]{beardon-1983}. For the time being, we shall only work with the orientation-preserving isometries \circled{1}. The orientation-reversing ones \circled{2} will later be of relevance, in Section~\ref{sub:absiothp} of the \hyperlink{sec:appendix}{appendix}. Let $\pi_{\isom(\mathbb{H})}:\sset{a,b}\to\isom(\mathbb{H})$ be the map given by $\pi_{\isom(\mathbb{H})}(a):=\big(z\mapsto\frac{q}{p}\cdot z\big)$ and $\pi_{\isom(\mathbb{H})}(b):=(z\mapsto z+1)$. As in Definition~\ref{defn:levelmaps}, it follows from von Dyck's theorem that this map can be uniquely extended to a group homomorphism $\pi_{\isom(\mathbb{H})}:G\to\isom(\mathbb{H})$. Now, we define $\pi_{\mathbb{H}}:G\to\mathbb{H}$ by $\pi_{\mathbb{H}}(g):=\pi_{\isom(\mathbb{H})}(g)(i)$.

\begin{lem}
\label{lem:lookingfromfront}
For every $g\in G$ the point $\pi_{\mathbb{H}}(ga)\in\mathbb{H}$ is above the point $\pi_{\mathbb{H}}(g)\in\mathbb{H}$; the two points have the same real part and their distance in the hyperbolic plane is $\ell_{a}:=\ln\big(\frac{q}{p}\big)$. Similarly, the point $\pi_{\mathbb{H}}(gb)\in\mathbb{H}$ is to the right of the point $\pi_{\mathbb{H}}(g)\in\mathbb{H}$; the two points have the same imaginary part and their distance in the hyperbolic plane is $\ell_{b}:=\ln\big(\frac{3+\sqrt{5}}{2}\big)$.
\end{lem}

\vspace{-10pt}\begin{proof}
This is clear for $g=1$. Now, pick an arbitrary element $g\in G$. The points $\pi_{\mathbb{H}}(ga)\in\mathbb{H}$ and $\pi_{\mathbb{H}}(g)\in\mathbb{H}$ are obtained by evaluating $\pi_{\isom(\mathbb{H})}(g)$ at $\pi_{\mathbb{H}}(a)\in\mathbb{H}$ and $\pi_{\mathbb{H}}(1)\in\mathbb{H}$. Since $g$ can be written as a product over $a^{\pm1}$ and $b^{\pm1}$, its image $\pi_{\isom(\mathbb{H})}(g)$ is the respective composition of $\pi_{\isom(\mathbb{H})}(a^{\pm1})$ and $\pi_{\isom(\mathbb{H})}(b^{\pm1})$. Being dilations and translations, the latter preserve the relative position of any two points in $\mathbb{H}$, and so does $\pi_{\isom(\mathbb{H})}(g)$. The same argument works for the second assertion, which completes the proof.
\end{proof}

\emph{Here and throughout the present paper, we use the symbol $\mathbb{N}_{0}$ to denote the non-negative integers and the symbol $\mathbb{N}$ to denote the strictly positive ones.}

\begin{defn}[``path'', ``reduced path'']
Given a simple graph with vertex set $V$, we consider finite paths $v:\sset{0,1,\dotsc,n}\to V$, infinite paths $v:\mathbb{N}_{0}\to V$, and doubly infinite paths $v:\mathbb{Z}\to V$. In any case, being a path means that for every possible choice of $k$ the vertices $v(k)$ and $v(k+1)$ are adjacent. A path is reduced if for every possible choice of $k$ the vertices $v(k)$ and $v(k+2)$ are distinct.
\end{defn}

\begin{rem}[``discrete hyperbolic plane'']
One way to recover hyperbolic structures within the Cayley graph $\Gamma$ is the following. Fix an ascending doubly infinite path $v:\mathbb{Z}\to G/B$ in the tree $T$. Ascending refers to the level function constructed in Definition \ref{defn:levelmaps} and Remark~\ref{rem:neighbours}, and it means that for every $k\in\mathbb{Z}$ the vertex $v(k)$ is located below the following vertex $v(k+1)$. Let $G_{v}$ be the full $\pi_{T}$-preimage of the path, i.\,e.~the set consisting of all $g\in G$ such that $\pi_{T}(g)$ is contained in $v(\mathbb{Z})$. The subgraph $\Gamma_{v}\leq\Gamma$ spanned by $G_{v}$, see \circled{1} in Figure~\ref{fig:hyperbolic}, is connected so that the graph distance $\ddhyp$ is a metric. This subgraph is sometimes referred to as discrete hyperbolic plane or plane of bricks, which makes particular sense in light of the fact that the restriction $\pi_{\mathbb{H}}|\hspace{0.5pt}_{G_{v}}:G_{v}\to\mathbb{H}$ is a quasi-isometry, even a bi-Lipschitz map, between the graph~$\Gamma_{v}$ endowed with the graph distance $\ddhyp$ and the hyperbolic plane~$\mathbb{H}$ endowed with the standard metric $\dhyp$.
\end{rem}

\begin{figure}
\begin{center}
  \includegraphics{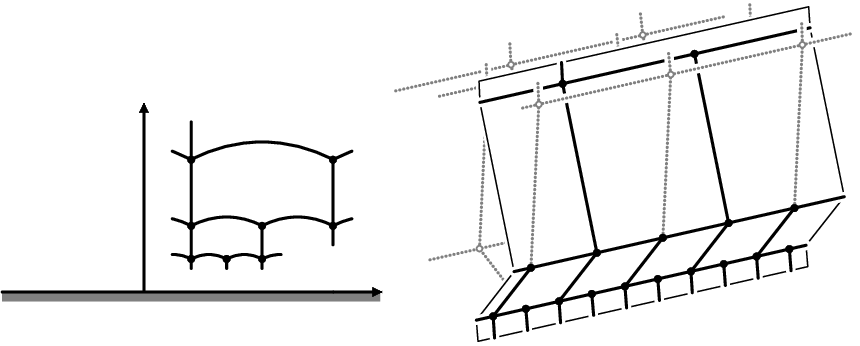}
  \begin{picture}(0,0)
    \put(-255,32){$\Re$}
    \put(-366,104){$\Im$}
    \put(-189,27){\circled{1}}
    \put(-321,80){\circled{2}}
  \end{picture}
\end{center}
\caption{A part of a discrete hyperbolic plane $\Gamma_{v}$ (right) and its projection to $\mathbb{H}$ (left).}
\label{fig:hyperbolic}
\end{figure}


\subsection{Compactifications}

Both the tree $T$ and the hyperbolic plane $\mathbb{H}$ have a natural compactification. In case of $T$, it is the \emph{end compactification}, which can be constructed as follows. Fix a base point, say $B\in G/B$, and consider the set $\widehat{T}$ of all reduced paths that start in $B$, be they finite or infinite. The endpoint map yields a one-to-one correspondence between the finite paths and the vertices $G/B$. We may therefore think of $G/B$ as a subset of $\widehat{T}$. The set $\widehat{T}$ can be endowed with the metric 
\begin{equation*}
  \dhatt(x,y):=\left\{\begin{array}{cl} 2^{-|x\wedge y|} & \text{if}~x\neq y \\ 0 & \text{if}~x=y \end{array}\right.\,,
\end{equation*}
where $|x\wedge y|$ denotes the number of edges the two paths run together until they separate, see \circled{1} in Figure~\ref{fig:compactifications}. In other words, $|x\wedge y|$ is the maximal number $k\in\mathbb{N}_{0}$ such that $x$ and $y$ are both defined at $k$ and the vertices $x(k)$ and $y(k)$ agree. Hence, the later the paths separate the closer they are. The set $\widehat{T}$ endowed with the metric $\dhatt$ is a compact metric space that contains $G/B$ as a discrete and dense subset. The complement of $G/B$ is the set of infinite paths, it is denoted by $\partial T$ and called the \emph{space of ends}.

In case of $\mathbb{H}$, we temporarily switch to the Poincar\'e disc model. Instead of working in the half-plane $\mathbb{H}=\set{z\in\mathbb{C}}{\Im(z)>0}$, we consider the open unit disc $\mathbb{D}:=\set{z\in\mathbb{C}}{|z|<1}$. The Cayley transform $W:\mathbb{H}\hooktwoheadrightarrow\mathbb{D}$ given by $W(z):=\frac{z-i}{z+i}$ is one possibility to convert between the two models. The hyperbolic topology on $\mathbb{D}$ is the one induced by the Cayley transform. It agrees with the standard topology on~$\mathbb{D}$ so that, topologically speaking, the hyperbolic plane in the Poincar\'e disc model is just a subspace of the complex plane $\mathbb{C}$. We may therefore compactify it by taking the closed unit disc $\widehat{\mathbb{D}}:=\set{z\in\mathbb{C}}{|z|\leq 1}$, see Figure~\ref{fig:compactifications}. In order to translate this compactification back to the Poincar\'e half-plane model, we first extend both the domain and the codomain of the Cayley transform so that we obtain a bijection $W:\mathbb{H}\cup\mathbb{R}\cup\sset{\infty}\hooktwoheadrightarrow\widehat{\mathbb{D}}$, and then apply its inverse. The space $\widehat{\mathbb{H}}:=\mathbb{H}\cup\mathbb{R}\cup\sset{\infty}$ is our compactification. It is, once again, endowed with the induced topology, and thus a compact space that contains $\mathbb{H}$ as a dense subset. The complement of $\mathbb{H}$ is the union $\mathbb{R}\cup\sset{\infty}$, it is denoted by $\partial\mathbb{H}$ and called the \emph{hyperbolic boundary}. Having introduced the hyperbolic boundary this way, the following lemma gives us a helpful criterion for convergence. Its proof is elementary and we leave it to the reader.

\begin{lem}
\label{lem:convergence}
A sequence $(x_{0},x_{1},\dotsc)$ in $\mathbb{H}$ converges to $r\in\mathbb{R}=\partial\mathbb{H}\smallsetminus\sset{\infty}$ if and only if it does with respect to the standard topology on the complex plane $\mathbb{C}$. The sequence converges to $\infty\in\partial\mathbb{H}$ if and only if the absolute values $|x_{n}|$ tend to $\infty$.
\end{lem}

\begin{figure}
\begin{center}
  \includegraphics{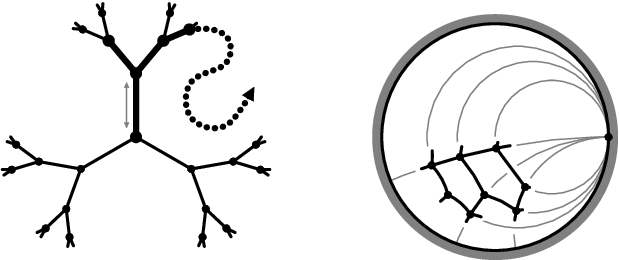}
  \begin{picture}(0,0)
    \put(-258,97){$x$}
    \put(-252,70.75){\circled{1}}
    \put(-196,72){$y$}
    \put(-240,45){$B$}
  \end{picture}
\end{center}
\caption{The space of ends (left) and the hyperbolic boundary in the Poincar\'e disc model (right).}
\label{fig:compactifications}
\end{figure}


\section{Random walks on groups}
\label{sec:randomwalks}


\subsection{Preliminaries}
\label{sub:p}

The aim of the present paper is to study random walks. Given a countable state space $X$, an initial probability measure $\vartheta:X\to[0,1]$, and transition probabilities $p:X\times X\to[0,1]$, we are interested in the Markov chain $Z=(Z_{0},Z_{1},\dotsc)$ that starts according to $\vartheta$ and proceeds according to $p$. Formally, we construct the probability space $(\Omega,\mathcal{A},\mathbb{P})$, where $\Omega:=\set{(x_{0},x_{1},\dotsc)}{x_{n}\in X}$ is the set of trajectories, $\mathcal{A}$ is the product $\sigma$-algebra, and $\mathbb{P}$ is the probability measure induced by $\vartheta$ and~$p$. The projections $Z_{n}:\Omega\to X$ given by $Z_{n}(x_{0},x_{1},\dotsc):=x_{n}$ then become random variables that constitute the Markov chain. We shall use the term \emph{random walk} instead of Markov chain.

Assume now that $X$ is a countable group $G$. In this situation, we may adapt the transition probabilities $p:G\times G\to [0,1]$ to the group structure. More precisely, let $\mu:G\to [0,1]$ be a probability measure on $G$ and consider the random walk given by the following data. The initial probability measure $\vartheta:G\to[0,1]$ puts all mass on the identity element $1\in G$ and the transition probabilities $p:G\times G\to [0,1]$ are given by $p(g,h):=\mu(g^{-1}h)$. We could also have said $p(g,gx):=\mu(x)$, which leads to a handy interpretation. The random walk starts at the identity element and has independent $\mu$-distributed increments being multiplied from the right to the current state. Hence, $Z_{0}=1$ a.\,s.~(=\,almost surely) and for every $n\in\mathbb{N}$, $Z_{n}=X_{1}\cdot\dotsc\cdot X_{n}$, where $X_{1},X_{2},\dotsc$ is a sequence of independent $\mu$-distributed random variables.

\emph{Throughout the present paper, we assume that the support $\supp(\mu)=\set{g\in G}{\mu(g)>0}$ generates $G$ as a semigroup. In other words, the random walk is irreducible; any two states can be reached from each other with positive probability.}

Given a probability space, e.\,g.~$(\Omega,\mathcal{A},\mathbb{P})$ described above, and a real valued random variable $X:\Omega\rightarrow\mathbb{R}$, the latter has finite first moment if $\int|X|\d{\mathbb{P}}$ is finite. In this case, both $\int X^{+}\d{\mathbb{P}}$ and $\int X^{-}\d{\mathbb{P}}$ are finite and we can define the expectation $\mathbb{E}(X):=\int X^{+}\d{\mathbb{P}}-\int X^{-}\d{\mathbb{P}}$. Of course, the difference would still make sense if only one of the two integrals was finite. But this case is not of relevance for us and when writing $\mathbb{E}(X)$ we implicitly mean that $\mathbb{E}(X)$ is a real number. More generally, given any non-negative $k\in\mathbb{R}$, the random variable $X:\Omega\rightarrow\mathbb{R}$ has finite $k$-th moment if $\int|X|^{k}\d{\mathbb{P}}$ is finite.

\begin{defn}[``word metric'']
If $G$ is a finitely generated group and $S\subseteq G$ is a finite generating set, then the word metric $\dwordS:G\times G\to\mathbb{N}_{0}$ is the distance in the respective Cayley graph. In other words,
\begin{equation*}
  \dwordS(g,h):=\min\set{n\in\mathbb{N}_{0}}{\exists\,s_{1},\dotsc,s_{n}\in S~\text{and}~\varepsilon_{1},\dotsc,\varepsilon_{n}\in\sset{1,-1}~\text{such that}~g^{-1}h={s_{1}}^{\varepsilon_{1}}\cdot\dotsc\cdot {s_{n}}^{\varepsilon_{n}}}\,.
\end{equation*}
\end{defn}

\begin{defn}[``finite k-th moment'' for G-valued random variables]
Let $G$ be a finitely generated group and $S\subseteq G$ be a finite generating set. A random variable $X:\Omega\to G$ has finite $k$-th moment if the image $\dwordS(1,X):\Omega\to\mathbb{N}_{0}$ has finite $k$-th moment in the classical sense, i.\,e.~if $\int\dwordS(1,X)^{k}\d{\mathbb{P}}$ is finite. It is well-known that this property does not depend on the choice of $S$.
\end{defn}


\subsection{Lebesgue--Rohlin spaces}

In order to define the Poisson--Furstenberg boundary, we need to ensure that we are working with \emph{Lebesgue--Rohlin spaces}, which are also known as standard probability spaces. For definitions, basic examples, and fundamental results, we refer to \cite{rohlin-1952}, \cite{haezendonck-1973}, and \cite{rudolph-1990}. Two short summaries can also be found in \cite[Appendix 1]{cfs-1982} and \cite[Appendix]{kkr-2004}.

\begin{exa}
\label{exa:polish}
A Polish space is a topological space that is separable and completely metrisable. All Polish spaces endowed with their Borel $\sigma$-algebra~$\mathcal{B}$ and a Borel measure $\mu$ become, after completion, examples of Lebesgue--Rohlin spaces, see \cite[\S 2.7]{rohlin-1952} and \cite[p.\,248, Example 1]{haezendonck-1973}.
\end{exa}

In light of Example \ref{exa:polish}, we observe that the space of trajectories $\Omega$ introduced in Section \ref{sub:p} is the product $X^{\mathbb{N}_{0}}$ and can therefore be endowed with the product topology. The resulting space is actually Polish, see e.\,g.~\cite[Theorem 24.11]{willard-1970}. Since its Borel $\sigma$-algebra agrees with the product $\sigma$-algebra~$\mathcal{A}$, the completion of $(\Omega,\mathcal{A},\mathbb{P})$ is a Lebesgue--Rohlin space. Let us assume that, as soon as a measurable space is endowed with a probability measure, we are working with its completion. We may therefore say that $(\Omega,\mathcal{A},\mathbb{P})$ is a Lebesgue--Rohlin space.

\emph{As it is customary, we always deal with Lebesgue--Rohlin spaces mod 0, i.\,e.~up to subsets of measure 0, even when it is not indicated explicitly.}


\subsection{Poisson--Furstenberg boundary}

Several equivalent definitions of the Poisson--Furstenberg boundary are given in \cite{kv-1983}. Since we are interested in the long-time behaviour of the trajectories $x=(x_{0},x_{1},\dotsc)\in\Omega$, we identify those pairs of trajectories whose tails sooner or later behave identically. More precisely, we define an equivalence relation $\sim$ on $\Omega$ by setting
\begin{equation*}
x\sim y:\iff\exists\,t_{1},t_{2}\in\mathbb{N}_{0}~\text{such that}~\forall\,n\in\mathbb{N}_{0}~\text{the equation}~x_{t_{1}+n}=y_{t_{2}+n}~\text{holds.}
\end{equation*}
Consider the partition~$\zeta$ of $\Omega$ into the equivalence classes mod $\sim$. It induces a sub-$\sigma$-algebra $\mathcal{A}(\zeta)\subseteq\mathcal{A}$ consisting of all those measurable sets $A\in\mathcal{A}$ that are, mod 0, unions of elements in $\zeta$. Being a complete sub-$\sigma$-algebra, $\mathcal{A}(\zeta)$ corresponds to a measurable partition $\zeta_{1}$ of $\Omega$. This partition is unique, up to equivalence mod 0. It has the properties that the induced sub-$\sigma$-algebra $\mathcal{A}(\zeta_{1})\subseteq\mathcal{A}$ coincides with~$\mathcal{A}(\zeta)$ and that the quotient of $(\Omega,\mathcal{A},\mathbb{P})$ by $\zeta_{1}$ is a Lebesgue--Rohlin space.

The latter is called the \emph{Poisson--Furstenberg boundary} and we denote it by $(B,\mathcal{B},\nu)$. It is naturally endowed with a \emph{boundary map} $\bnd:\Omega\to B$ assigning to every trajectory $x\in\Omega$ the element in $\zeta_{1}$ that contains it. The boundary map is a measurable and measure-preserving map between Lebesgue--Rohlin spaces; such a map is called a \emph{homomorphism}.

Here, we consider irreducible random walks on countable groups $G$. In this situation, the measurable $G$-action on $\Omega$ given by $g(x_{0},x_{1},\dotsc):=(gx_{0},gx_{1},\dotsc)$ induces a measurable $G$-action on $B$ so that the measure $\nu$ is \emph{$\mu$-stationary}, i.\,e.~$\nu(A)=\sum_{g\in G}\mu(g)\cdot\nu(g^{-1}A)$, and \emph{quasi-invariant}, i.\,e.~the $G$-action maps null sets $A$ to null sets $gA$.


\subsection{Some results about the Poisson--Furstenberg boundary}
\label{sub:sratpfb}

Given such a random walk, it is a challenging problem to decide whether the Poisson--Furstenberg boundary is trivial or not. In the latter case, one may wonder how to identify it geometrically. We shall only address a few results; a survey was given by Erschler in \cite{erschler-2010}. As always, we assume that the random walk is irreducible.

If $G$ is abelian, then the Poisson--Furstenberg boundary is trivial, see \cite{blackwell-1955} and \cite{cd-1960}. The same holds true for all groups of polynomial growth, and for groups of subexponential growth endowed with a probability measure $\mu$ with finite first moment. For the special case of probability measures with finite support, see \cite{avez-1974}, and for the general case, see e.\,g.~\cite[Theorem 5.3]{kw-2002} and \cite[\S 4]{erschler-2004}. Moreover, it was shown in \cite{erschler-2004}, that the assumption of finite first moment cannot be dropped. If $G$ is amenable, then there is at least one symmetric probability measure $\mu$ such that the Poisson--Furstenberg boundary is trivial, see the conjecture in \cite[\S 9]{furstenberg-1973}. The proof was announced in \cite[Theorem 4]{vk-1979} and given in \cite{rosenblatt-1981} and \cite{kv-1983}.

For random walks on the Baumslag--Solitar group $G=\BS(1,2)$ with finite first moment, one can be more specific. Here, the Poisson--Furstenberg boundary is isomorphic to $\mathbb{R}$ for $\delta<0$, trivial for $\delta=0$, and isomorphic to $\mathbb{Q}_{2}$ for $\delta>0$, see \cite[Theorem~5.1]{kaimanovich-1991}. Further results about random walks on rational affinities are given in \cite{brofferio-2006}. If $G$ is non-amenable, then the Poisson--Furstenberg boundary can never be trivial, see \cite[\S 9]{furstenberg-1973} and \cite[\S4.2]{kv-1983}. This holds in particular for random walks on non-amenable Baumslag--Solitar groups, even when $\delta=0$.

\begin{rem}
There are striking similarities between solvable Baumslag--Solitar groups and lamplighter groups. For example, while $\BS(1,2)$ can be described as the semidirect product $\mathbb{Z}\big[\frac{1}{2}\big]\rtimes\mathbb{Z}$, where $1\in\mathbb{Z}$ acts by doubling, the lamplighter group $\mathbb{Z}_{2}\wr\mathbb{Z}$ is defined as $\big(\bigoplus_{i\in\mathbb{Z}}\mathbb{Z}_{2}\big)\rtimes\mathbb{Z}$, where $1\in\mathbb{Z}$ acts by shifting the index by 1. For results on the Poisson--Furstenberg boundary of random walks on lamplighter groups, see \cite{vk-1979}, \cite{kv-1983}, \cite{lp-2015}, and also \cite{sava-2010}.
\end{rem}


\subsection{Kaimanovich's strip criterion}

Kaimanovich's strip criterion, which we recall below, is a tool for identifying the Poisson--Furstenberg boundary geometrically. We state it as a theorem and then briefly discuss the notions appearing in the statement. For the proof, we refer to \cite[\S 6.4]{kaimanovich-2000}.

\begin{thm}[``strip criterion'']
\label{thm:strip}
Let $Z=(Z_{0},Z_{1},\dotsc)$ be a random walk on a countable group $G$ driven by a probability measure $\mu$ with finite entropy $H(\mu)$. Moreover, let $(B_{-},\mathcal{B}_{-},\nu_{-})$ and $(B_{+},\mathcal{B}_{+},\nu_{+})$ be $\check{\mu}$- and $\mu$-boundaries, respectively. If there exist a gauge $\mathcal{G}=(\mathcal{G}_{1},\mathcal{G}_{2},\dotsc)$ on $G$ with associated gauge function $|\cdot|=|\cdot|_{\mathcal{G}}$ and a measurable $G$-equivariant map $\mathcal{S}$ assigning to pairs of points $(b_{-},b_{+})\in B_{-}\times B_{+}$ non-empty strips $\mathcal{S}(b_{-},b_{+})\subseteq G$ such that for every $g\in G$ and $\nu_{-}\otimes\nu_{+}$-almost every $(b_{-},b_{+})\in B_{-}\times B_{+}$ 
\begin{equation*}
  \frac{1}{n}\cdot\ln\left(\card\left(\mathcal{S}(b_{-},b_{+})g\cap\mathcal{G}_{|Z_{n}|}\right)\right)\,\xrightarrow{~n\to\infty~}\,0~~\text{in probability},
\end{equation*}
then the $\mu$-boundary $(B_{+},\mathcal{B}_{+},\nu_{+})$ is maximal.
\end{thm}

\begin{rem}
\label{rem:removeg}
The proof shows that it is not even necessary to verify the convergence for every $g\in G$. It suffices to consider the special case $g=1$ as long as we can ensure that a random strip contains the identity element $1\in G$ with positive probability, i.\,e.~that $\nu_{-}\otimes\nu_{+}\set{(b_{-},b_{+})\in B_{-}\times B_{+}}{1\in \mathcal{S}(b_{-},b_{+})}>0$.
\end{rem}

The \emph{entropy} of the probability measure $\mu$ is given by $H(\mu):=\sum_{g\in G}-\log_{2}(\mu(g))\cdot\mu(g)$. Here, as usual, one defines $-\log_{2}(0)\cdot 0:=0$. The assumption of finite entropy will be no issue for us because Baumslag--Solitar groups are finitely generated and the increments of the random walks under consideration will all have finite first moment. This implies that their probability measures~$\mu$ have finite entropy, as stated in the following lemma. Its proof is elementary. For an idea, see e.\,g.~\cite[Theorem 4.1]{gps-1994}.

\begin{lem}
\label{lem:entropy}
Let $G$ be a finitely generated group and let $\mu:G\to[0,1]$ be a probability measure. If a $\mu$-distributed random variable $X:\Omega\to G$ has finite first moment, then $\mu$ has finite entropy.
\end{lem}

Kaimanovich defines a \emph{$\mu$-boundary} as the quotient of the Poisson--Furstenberg boundary with respect to some $G$-invariant measurable partition, see e.\,g.~\cite[\S 1.5]{kaimanovich-2000}. The Poisson--Furstenberg boundary is therefore itself a $\mu$-boundary, the \emph{maximal} one. Moreover, every Lebesgue--Rohlin space $(B_{+},\mathcal{B}_{+},\nu_{+})$ endowed with a measurable $G$-action and a homomorphism $\bnd_{+}:\Omega\to B_{+}$ that is \circled{1} $\sim$-invariant and \circled{2}~$G$-equivariant is a $\mu$-boundary. At this point, recall that the random walk is irreducible, whence the properties \circled{1} and \circled{2} already imply that the measure $\nu_{+}$ is $\mu$-stationary and quasi-invariant.

While~$\mu$ is the probability measure that drives the random walk, the symbol $\check{\mu}$ denotes the reflected probability measure, which is given by $\check{\mu}(g):=\mu(g^{-1})$. Accordingly, a \emph{$\check{\mu}$-boundary} is a Lebesgue--Rohlin space $(B_{-},\mathcal{B}_{-},\nu_{-})$ that satisfies the requirements of a $\mu$-boundary when replacing $\mu$ by $\check{\mu}$. 

A \emph{gauge} $\mathcal{G}$ is an exhaustion $\mathcal{G}=(\mathcal{G}_{1},\mathcal{G}_{2},\dotsc)$ of the group $G$, i.\,e.~a sequence of subsets $\mathcal{G}_{k}\subseteq G$ which is increasing $\mathcal{G}_{1}\subseteq\mathcal{G}_{2}\subseteq\dotsc$ and whose union $\mathcal{G}_{1}\cup\mathcal{G}_{2}\cup\dotsc$ is the whole group $G$. Given a gauge $\mathcal{G}$ and an element $g\in G$, we may ask for the minimal index $k\in\mathbb{N}$ with the property that $g\in\mathcal{G}_{k}$. This index is the value of the associated gauge function $|\cdot|=|\cdot|_{\mathcal{G}}$ at $g$.

\begin{rem}
Kaimanovich distinguishes between various kinds of gauges, see \cite{kaimanovich-2000}. For example, a gauge $\mathcal{G}$ is subadditive if any two group elements $g_{1},g_{2}\in G$ satisfy $|g_{1}g_{2}|\leq|g_{1}|+|g_{2}|$ and it is temperate if all gauge sets $\mathcal{G}_{k}$ are finite and grow at most exponentially. Even though these two properties do play a crucial role in the corollaries to the strip criterion given in \cite[\S 6.5]{kaimanovich-2000}, they are not required in the strip criterion itself. And, in fact, not all of our gauges will have these two properties.
\end{rem}

The power set $\sset{0,1}^{G}$ is naturally endowed with the product $\sigma$-algebra, which enables us to talk about \emph{measurability} of the map $\mathcal{S}:B_{-}\times B_{+}\rightarrow\sset{0,1}^{G}$. More precisely, the product $\sigma$-algebra on $\sset{0,1}^{G}$ is generated by the coordinate projections. Since the set $\sset{0,1}$ consists of only two elements, the $\sigma$-algebra is already generated by the preimages of $1\in\sset{0,1}$. In order to show that $\mathcal{S}$ is measurable, it thus suffices to verify that for every $g\in G$ the set of all $(b_{-},b_{+})\in B_{-}\times B_{+}$ whose strip $\mathcal{S}(b_{-},b_{+})\subseteq G$ contains the element $g\in G$ is measurable. As soon as we know that $\mathcal{S}$ is $G$-equivariant, it even suffices to verify measurability for $g=1$, which will be immediate for the strips under consideration. 


\section{Identification of the Poisson--Furstenberg boundary}
\label{sec:identification}


\subsection{Convergence to the boundary of the hyperbolic plane}
\label{sub:cttbothp}

Let us now return to $G=\BS(p,q)$ with $1\leq p<q$ and consider a random walk $Z=(Z_{0},Z_{1},\dotsc)$ on $G$. When working with the projection $\pi_{\mathbb{H}}:G\rightarrow\mathbb{H}$, we may analyse the imaginary part $\Im(\pi_{\mathbb{H}}(g))$ and the real part $\Re(\pi_{\mathbb{H}}(g))$ separately, and it is convenient to abbreviate the former by $A_{g}$ and the latter by~$B_{g}$. Occasionally, we do not assume that $X_{1}$ has some finite moment but impose this assumption on the images $\ln(A_{X_{1}})$ and $\ln(1+|B_{X_{1}}|)$. The following lemma relates the two situations.

\begin{lem}
\label{lem:assumptions}
If $X_{1}$ has finite $k$-th moment, then $\ln(A_{X_{1}})$ and $\ln(1+|B_{X_{1}}|)$ have finite $k$-th moment, too.
\end{lem}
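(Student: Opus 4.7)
The plan is to bound $|\ln(A_g)|$ and $\ln(1+|B_g|)$ linearly by the word length $\dwordS(1,g)$, and then integrate.

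First I would compute $A_g$ and $B_g$ explicitly. Since $\pi_{\mathbb{H}}(g)=\pi_{\Affplus(\mathbb{R})}(g)(i)$, writing $\pi_{\Affplus(\mathbb{R})}(g)$ as the affine map $z\mapsto\alpha z+\beta$ gives $A_g=\alpha$ and $B_g=\beta$. Pick a shortest word $g=s_1\cdots s_n$ in the generators $\{a^{\pm 1},b^{\pm 1}\}$, so $n=\dwordS(1,g)$. Since $\pi_{\Affplus(\mathbb{R})}$ is a homomorphism and each generator $s_i$ corresponds to coefficients $(\alpha_{s_i},\beta_{s_i})$ with $\alpha_{a^{\pm 1}}=(q/p)^{\pm 1}$, $\beta_{a^{\pm 1}}=0$ and $\alpha_{b^{\pm 1}}=1$, $\beta_{b^{\pm 1}}=\pm 1$, composing gives
\begin{equation*}
  \alpha=\prod_{i=1}^{n}\alpha_{s_i}=(q/p)^{\lambda(g)},\qquad \beta=\sum_{i=1}^{n}\alpha_{s_1}\cdots\alpha_{s_{i-1}}\beta_{s_i},
\end{equation*}
where $\lambda(g)$ is the signed $a$-exponent sum.

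For the imaginary part this immediately yields $|\ln(A_g)|=|\lambda(g)|\,\ell_a\leq \ell_a\cdot\dwordS(1,g)$, since each generator changes $\lambda$ by at most $1$. For the real part, using $|\alpha_{s_1}\cdots\alpha_{s_{i-1}}|\leq(q/p)^{n}$ and $|\beta_{s_i}|\leq 1$ gives $|B_g|\leq n(q/p)^n$, hence, using $1+n(q/p)^n\leq(n+1)(q/p)^n$ and $\ln(n+1)\leq n$,
\begin{equation*}
  \ln(1+|B_g|)\leq \ln(n+1)+n\,\ell_a\leq (1+\ell_a)\cdot\dwordS(1,g).
\end{equation*}
(The $n=0$ case is trivial since then $B_g=0$.)

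Substituting $g=X_1(\omega)$ and raising to the $k$-th power yields the pointwise bounds $|\ln(A_{X_1})|^k\leq \ell_a^{k}\,\dwordS(1,X_1)^k$ and $\ln(1+|B_{X_1}|)^k\leq(1+\ell_a)^{k}\,\dwordS(1,X_1)^k$. Integrating against $\mathbb{P}$ and invoking the assumed finite $k$-th moment of $X_1$ gives the conclusion. I expect no real obstacle: the only thing requiring care is the exponential-in-$n$ bound on the partial products of $\alpha_{s_i}$'s, which is nevertheless tamed by the logarithm, leaving a linear-in-$n$ estimate that is compatible with arbitrary finite $k$-th moments.
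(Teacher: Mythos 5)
Your proof is correct, and for the first assertion it coincides with the paper's: both reduce $|\ln(A_{X_1})|$ to $\ln\big(\tfrac{q}{p}\big)\cdot|\lambda(X_1)|\leq\ln\big(\tfrac{q}{p}\big)\cdot\dwordS(1,X_1)$. For the second assertion you take a genuinely different, more elementary route. The paper passes through hyperbolic geometry: it uses $\ln(1+|B_g|)\leq\arcosh\big(1+\tfrac12|B_g|^2\big)=\dhyp(i,i+B_g)$, the triangle inequality via the point $A_g\cdot i+B_g$, and the Lipschitz estimate $\dhyp(\pi_{\mathbb{H}}(1),\pi_{\mathbb{H}}(g))\leq\max\sset{\ell_a,\ell_b}\cdot\dwordS(1,g)$ borrowed from the proof of Proposition \ref{prop:discretehyperbolicplane}. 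You instead expand the affine cocycle explicitly, $B_g=\sum_{i}\alpha_{s_1}\cdots\alpha_{s_{i-1}}\beta_{s_i}$ (the same identity the paper only uses later, in the proof of Lemma \ref{lem:negativedrifthyp}), bound it crudely by $n\big(\tfrac{q}{p}\big)^n$ with $n=\dwordS(1,g)$, and let the logarithm tame the exponential to get a linear bound. Your version is self-contained and avoids invoking the quasi-isometry machinery, at the cost of a slightly worse constant ($1+\ell_a$ versus $\max\sset{\ell_a,\ell_b}+\ell_a$ is actually comparable, so essentially no cost); the paper's version reuses geometric estimates it needs anyway elsewhere. Both yield the required pointwise bound by a constant multiple of $\dwordS(1,X_1)$, after which integration is identical.
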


\begin{rem}
\label{rem:multiple}
It follows from the definition of $\pi_{\mathbb{H}}$ that for every $g\in G$ the equation $A_{g}=q^{\lambda(g)}p^{-\lambda(g)}$ holds. Taking the logarithm on both sides yields the equation $\ln(A_{g})=\ln\big(\frac{q}{p}\big)\cdot\lambda(g)$. So, instead of thinking of\,\,$\ln(A_{g})$ we may think of a multiple of $\lambda(g)$.
\end{rem}

\vspace{-10pt}\begin{proof}[Proof of Lemma \ref{lem:assumptions}]
Let $S:=\sset{a,b}\subseteq G$ be the standard generating set. Then,
\begin{equation*}
  \int|\ln(A_{X_{1}})|^{k}\d{\mathbb{P}}=\left(\ln\left(\frac{q}{p}\right)\right)^{k}\cdot\int|\lambda(X_{1})|^{k}\d{\mathbb{P}}\leq\left(\ln\left(\frac{q}{p}\right)\right)^{k}\cdot\underbrace{\int\dwordS(1,X_{1})^{k}\d{\mathbb{P}}}_{<\;\infty}<\infty\,.
\end{equation*}
This proves the first assertion. For the second one, Lemma~\ref{lem:lookingfromfront} implies that the distance $\dhyp(\pi_{\mathbb{H}}(1),\pi_{\mathbb{H}}(g))$ is at most $\max\sset{\ell_{a},\ell_{b}}\cdot\dwordS(1,g)$. This allows us to estimate $\ln(1+|B_{g}|)$ by a multiple of $\dwordS(1,g)$. Indeed, 
\begin{align*}
  \ln(1+|B_{g}|) &\leq\ln\left(1+\frac{1}{2}\cdot|B_{g}|^2+\sqrt{\left(1+\frac{1}{2}\cdot|B_{g}|^2\right)^{2}-1}\;\right)=\arcosh\left(1+\frac{1}{2}\cdot|B_{g}|^2\right) \\ &=\dhyp(i,i+B_{g})\leq\dhyp(i,A_{g}\cdot i+B_{g})+\dhyp(A_{g}\cdot i+B_{g},i+B_{g})=\dhyp(\pi_{\mathbb{H}}(1),\pi_{\mathbb{H}}(g))+|\ln(A_{g})| \\[6pt] &\leq\max\sset{\ell_{a},\ell_{b}}\cdot\dwordS(1,g)+\ln\left(\frac{q}{p}\right)\cdot|\lambda(g)|\leq\max\sset{\ell_{a},\ell_{b}}\cdot\dwordS(1,g)+\ln\left(\frac{q}{p}\right)\cdot\dwordS(1,g)\,.
\end{align*}
Therefore,
\begin{equation*}
  \int\ln(1+|B_{X_{1}}|)^{k}\d{\mathbb{P}}\leq\left(\max\sset{\ell_{a},\ell_{b}}+\ln\left(\frac{q}{p}\right)\right)^{k}\cdot\underbrace{\int\dwordS(1,X_{1})^{k}\d{\mathbb{P}}}_{<\;\infty}<\infty\,,
\end{equation*}
which proves the second assertion.
\end{proof}

\begin{defn}[``vertical drift'']
If\,\,$\ln(A_{X_{1}})$ has finite first moment, then $\lambda(X_{1})$ has finite first moment and we can define the expectation $\mathbb{E}(\lambda(X_{1}))$. We call the latter the vertical drift and denote it by~$\delta$.
\end{defn}

The following lemmas concern the behaviour of the projections $\pi_{\mathbb{H}}(Z_{n})$. They seem to be well-known and we do not claim originality. But, for the sake of completeness, we give rigorous proofs.

\begin{lem}
\label{lem:positivedrifthyp}
Assume that $\ln(A_{X_{1}})$ has finite first moment. If $Z$ has positive vertical drift $\delta>0$, then the projections $\pi_{\mathbb{H}}(Z_{n})$ converge a.\,s.~to $\infty\in\partial\mathbb{H}$.
\end{lem}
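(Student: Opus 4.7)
The plan is to reduce the assertion to the Strong Law of Large Numbers (SLLN) applied to the random walk on the integers obtained by projecting to levels. The link between the hyperbolic imaginary part and the level is given by Remark \ref{rem:multiple}, which says $\ln(A_g) = \ln(q/p) \cdot \lambda(g)$ for every $g \in G$.

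First I would note that, because $\ln(q/p) > 0$ is a positive constant, the finite first moment assumption on $\ln(A_{X_1})$ is equivalent to finite first moment of $\lambda(X_1)$. In particular, the vertical drift $\delta = \mathbb{E}(\lambda(X_1))$ is well-defined, and we are told $\delta > 0$. As remarked just before Definition \ref{def:verticaldrift}, the pointwise projection $\lambda(Z_n) = \lambda(X_1) + \cdots + \lambda(X_n)$ is an i.i.d.~random walk on $\mathbb{Z}$ with increment mean $\delta$, so Kolmogorov's SLLN yields
\begin{equation*}
  \frac{\lambda(Z_n)}{n} \xrightarrow{~n\to\infty~} \delta \quad \text{a.s.}
\end{equation*}
Since $\delta > 0$, this gives $\lambda(Z_n) \to +\infty$ almost surely.

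Next, I would translate this back to the hyperbolic picture via Remark \ref{rem:multiple}. Because $A_{Z_n} = (q/p)^{\lambda(Z_n)}$ and $q/p > 1$, almost surely $A_{Z_n} \to \infty$. As $|\pi_{\mathbb{H}}(Z_n)| \geq \Im(\pi_{\mathbb{H}}(Z_n)) = A_{Z_n}$, this forces $|\pi_{\mathbb{H}}(Z_n)| \to \infty$ almost surely. An application of Lemma \ref{lem:convergence} then concludes that $\pi_{\mathbb{H}}(Z_n) \to \infty \in \partial\mathbb{H}$ almost surely.

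There is no real obstacle here: the only subtlety is noticing that a purely one-dimensional SLLN on the level projection is enough, because the imaginary part is an exponential of the level and, in the Poincar\'e half-plane model, divergence of the imaginary part alone is enough to force convergence to the distinguished boundary point $\infty$. No assumption on $\ln(1+|B_{X_1}|)$ (hence on horizontal displacement) is needed in this regime, which is consistent with the paper's comment that the zero-drift case is the subtle one.
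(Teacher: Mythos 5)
Your argument is correct and follows essentially the same route as the paper: the strong law of large numbers applied to the level projection $\lambda(Z_{n})$, translated into divergence of the imaginary parts $A_{Z_{n}}$ via Remark \ref{rem:multiple}, followed by an appeal to Lemma \ref{lem:convergence}. The only addition is your explicit observation that $|\pi_{\mathbb{H}}(Z_{n})|\geq A_{Z_{n}}$, which the paper leaves implicit.
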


\vspace{-10pt}\begin{proof}
By the strong law of large numbers,
\begin{equation*}
  \frac{\lambda(Z_{n})}{n}=\frac{\lambda(X_{1})+\dotsc+\lambda(X_{n})}{n}\,\xrightarrow[\text{\raisebox{.5ex}{a.\,s.}}]{~n\to\infty~}\,\mathbb{E}(\lambda(X_{1}))=\delta>0\,.
\end{equation*}
Therefore, the projections $\lambda(Z_{n})$ tend a.\,s.~to $\infty$. By Remark \ref{rem:multiple}, so do the imaginary parts~$A_{Z_{n}}$ and the absolute values $|\pi_{\mathbb{H}}(Z_{n})|$. Now, Lemma~\ref{lem:convergence} completes the proof.
\end{proof}

\begin{lem}
\label{lem:negativedrifthyp}
Assume that both $\ln(A_{X_{1}})$ and $\ln(1+|B_{X_{1}}|)$ have finite first moment. If $Z$ has negative vertical drift $\delta<0$, then the projections $\pi_{\mathbb{H}}(Z_{n})$ converge a.\,s.~to a random element $r\in\mathbb{R}=\partial\mathbb{H}\smallsetminus\sset{\infty}$.
\end{lem}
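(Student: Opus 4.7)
The plan is to unfold the projection $\pi_{\mathbb{H}}(Z_n)$ explicitly via the homomorphism to $\Affplus(\mathbb{R})$, to show that the imaginary part of $\pi_{\mathbb{H}}(Z_n)$ collapses to $0$ at a geometric rate, and to show that the real part converges absolutely to a finite random real number. An application of Lemma \ref{lem:convergence} then upgrades this to convergence inside the compactification $\widehat{\mathbb{H}}$.

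Writing $\pi_{\Affplus(\mathbb{R})}(X_k)(z) = A_{X_k}z + B_{X_k}$ and iterating the composition formula from Section \ref{sub:ptthp}, a straightforward induction on $n$ gives
\begin{equation*}
  \pi_{\Affplus(\mathbb{R})}(Z_n)(z) = \Bigl(\prod_{k=1}^{n} A_{X_k}\Bigr)\,z + \sum_{k=1}^{n} A_{X_1}\cdots A_{X_{k-1}}\,B_{X_k}\,.
\end{equation*}
Evaluating at $z=i$ yields $A_{Z_n} = \prod_{k=1}^{n} A_{X_k}$ and $B_{Z_n} = \sum_{k=1}^{n} A_{X_1}\cdots A_{X_{k-1}}\,B_{X_k}$. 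Since the $X_k$ are i.i.d.~and $\ln(A_{X_1})$ has finite first moment, the strong law of large numbers combined with Remark \ref{rem:multiple} yields $\frac{1}{n}\sum_{k=1}^{n}\ln(A_{X_k}) \to \ln\!\bigl(\tfrac{q}{p}\bigr)\cdot\delta =: -2c$ almost surely, with $c>0$ because $\delta<0$. Hence, almost surely, $A_{X_1}\cdots A_{X_{k-1}} \leq e^{-c(k-1)}$ for all $k$ large enough, so in particular $A_{Z_n}\to 0$ and $\Im(\pi_{\mathbb{H}}(Z_n))\to 0$ almost surely.

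The decisive step is to prove that the series defining $B_{Z_n}$ converges absolutely almost surely. Since $Y_k := \ln(1+|B_{X_k}|)$ is a non-negative i.i.d.~sequence with finite first moment, a routine Borel--Cantelli argument (the classical statement $\sum_{k}\mathbb{P}(Y_1>\eta k)\leq\frac{1}{\eta}\mathbb{E}|Y_1|+1<\infty$) gives $Y_k/k\to 0$ almost surely. Choosing $\eta = c/2$ and combining this with the geometric bound on the partial products, we obtain, almost surely for all $k$ large enough,
\begin{equation*}
  \bigl|A_{X_1}\cdots A_{X_{k-1}}\,B_{X_k}\bigr| \leq e^{-c(k-1)}\cdot e^{ck/2} = e^{c}\cdot e^{-ck/2}\,,
\end{equation*}
so the tail of the series is dominated by a geometric series and converges absolutely. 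Therefore $B_{Z_n}$ converges almost surely to some real-valued random variable $r$.

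Putting the two parts together, $\pi_{\mathbb{H}}(Z_n) = B_{Z_n} + i\,A_{Z_n}$ converges almost surely to $r$ in the standard topology of $\mathbb{C}$ with $r\in\mathbb{R}\subseteq\partial\mathbb{H}\smallsetminus\sset{\infty}$, and Lemma \ref{lem:convergence} translates this into convergence in $\widehat{\mathbb{H}}$. I expect the main obstacle to be the tight interplay in this series estimate: the real increments $B_{X_k}$ may be enormous, since only a logarithmic moment is available, but the strict negativity of the vertical drift $\delta$ guarantees genuine geometric contraction of the coefficients $A_{X_1}\cdots A_{X_{k-1}}$; the comparison rates $c$ and $c/2$ have to be chosen so that the exponential decay of the coefficients beats the sub-exponential growth of $|B_{X_k}|$ allowed by the weak logarithmic moment assumption.
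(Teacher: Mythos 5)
Your argument is correct and is essentially the proof in the paper: both unfold $\pi_{\Affplus(\mathbb{R})}(Z_{n})$ to get $B_{Z_{n}}=\sum_{k=1}^{n}A_{X_{1}}\cdots A_{X_{k-1}}B_{X_{k}}$, control the partial products via the strong law of large numbers applied to $\ln(A_{X_{k}})$, and use the Borel--Cantelli lemma to get $\ln(1+|B_{X_{k}}|)/k\to 0$ a.\,s. The only cosmetic difference is that the paper packages the final step as Cauchy's root test, $\limsup_{k}|C_{k}|^{1/k}\leq(q/p)^{\delta}<1$, whereas you dominate the tail directly by a geometric series; the ingredients and the way the two moment hypotheses are used are identical.
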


\vspace{-10pt}\begin{proof}
The proof of Lemma \ref{lem:positivedrifthyp} can be adapted to show that the imaginary parts~$A_{Z_{n}}$ converge a.\,s.~to~$0$, whence we only need to understand the behaviour of the real parts $B_{Z_{n}}$. By the construction of the group homomorphism $\pi_{\isom(\mathbb{H})}:G\to\isom(\mathbb{H})$, each isometry $\pi_{\isom(\mathbb{H})}(g)$ with $g\in G$ is of the form $z\mapsto\alpha z+\beta$ with $\alpha,\beta\in\mathbb{R}$ and $\alpha>0$. So, the equation $\pi_{\mathbb{H}}(g)=A_{g}\cdot i+B_{g}$ yields $\pi_{\isom(\mathbb{H})}(g)(z)=A_{g}\cdot z+B_{g}$ and, in light of the multiplication $(\pi_{\isom(\mathbb{H})}(g_{1})\circ\pi_{\isom(\mathbb{H})}(g_{2}))(z)=A_{g_{1}}\cdot A_{g_{2}}\cdot z+A_{g_{1}}\cdot B_{g_{2}}+B_{g_{1}}$, we obtain
\begin{align*}
  \pi_{\mathbb{H}}(Z_{n}) &=\pi_{\isom(\mathbb{H})}(Z_{n})(i)=\pi_{\isom(\mathbb{H})}(X_{1}\cdot\dotsc\cdot X_{n})(i) \\[5pt] &=(\pi_{\isom(\mathbb{H})}(X_{1})\circ\dotsc\circ\pi_{\isom(\mathbb{H})}(X_{n}))(i) \\ &=A_{X_{1}}\cdot\dotsc\cdot A_{X_{n}}\cdot i+\sum_{k=1}^{n}A_{X_{1}}\cdot\dotsc\cdot A_{X_{k-1}}\cdot B_{X_{k}}\,.
\end{align*}
Therefore, the real parts $B_{Z_{n}}$ are the partial sums of the series $\sum_{k=1}^{\infty}C_{k}$ with $C_{k}:=A_{X_{1}}\cdot\dotsc\cdot A_{X_{k-1}}\cdot B_{X_{k}}$. In order to verify a.\,s.~convergence of this series, we apply Cauchy's root test, 
\begin{equation*}
  |C_{k}|^{\frac{1}{k}}\leq\exp\left(\ln\left(\frac{q}{p}\right)\cdot\underbrace{\frac{\lambda(X_{1})+\dotsc+\lambda(X_{k-1})}{k-1}}_{\to\;\mathbb{E}(\lambda(X_{1}))\;=\;\delta\;<\;0~\text{a.\,s.}}\,\cdot\,\underbrace{\frac{k-1}{k}}_{\to\;1}\,\right)\cdot\exp\left(\underbrace{\frac{\ln(1+|B_{X_{k}}|)}{k}}_{\to\;0~\text{a.\,s.}}\,\right)\,\xrightarrow[\text{\raisebox{.5ex}{a.\,s.}}]{~k\to\infty~}\,\left(\frac{q}{p}\right)^{\delta}<1\,.
\end{equation*}
The convergence claimed in the first factor follows from the strong law of large numbers, the one claimed in the second factor from the Borel--Cantelli lemma. Indeed, let $Q_{k}:=\frac{1}{k}\cdot\ln(1+|B_{X_{k}}|)$. In order to show that $Q_{k}$ converges a.\,s.~to $0$, recall that $\ln(1+|B_{X_{1}}|)$ has finite first moment. For every $\varepsilon>0$ we may thus estimate
\begin{equation*}
  \sum_{k=1}^{\infty}\mathbb{P}(Q_{k}>\varepsilon)\leq\sum_{k=1}^{\infty}\mathbb{P}\left(\left\lceil\,\frac{\ln(1+|B_{X_{1}}|)}{\varepsilon}\,\right\rceil\geq k\right)=\mathbb{E}\left(\left\lceil\,\frac{\ln(1+|B_{X_{1}}|)}{\varepsilon}\,\right\rceil\right)\,.
\end{equation*}
Now, the Borel--Cantelli lemma yields $\mathbb{P}(\exists\;\text{infinitely many}~k\in\mathbb{N}~\text{such that}~Q_{k}>\varepsilon)=0$, from where we may conclude that $Q_{k}$ converges a.\,s.~to $0$, as claimed above. Therefore, $\limsup_{k\to\infty}|C_{k}|^{\frac{1}{k}}<1$ a.\,s., whence $\sum_{k=1}^{\infty}C_{k}$ converges a.\,s.~to a random element $r\in\mathbb{R}$.
\end{proof}

What remains is the driftless case. An answer was given by Brofferio in \cite[Theorem 1]{brofferio-2003}. It says that if $\ln(A_{X_{1}})$ and $\ln(1+|B_{X_{1}}|)$ have finite first moment, then the projections $\pi_{\mathbb{H}}(Z_{n})$ converge a.\,s.~to $\infty\in\partial\mathbb{H}$. For us, a result of slightly different flavour will be of relevance.

\begin{lem}
\label{lem:zerodrifthyp}
Assume that $\ln(A_{X_{1}})$ has finite second moment and there is an $\varepsilon>0$ such that $\ln(1+|B_{X_{1}}|)$ has finite $(2+\varepsilon)$-th moment. If $Z$ has no vertical drift, i.\,e.~$\delta=0$, then the projections $\pi_{\mathbb{H}}(Z_{n})$ have sublinear speed, i.\,e.
\begin{equation*}
  \frac{\dhyp(\pi_{\mathbb{H}}(Z_{0}),\pi_{\mathbb{H}}(Z_{n}))}{n}\,\xrightarrow[\text{\raisebox{.5ex}{a.\,s.}}]{~n\to\infty~}\,0\,.
\end{equation*}
\end{lem}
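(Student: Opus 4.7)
The plan is to bound $\dhyp(\pi_{\mathbb{H}}(Z_0),\pi_{\mathbb{H}}(Z_n))$ by two pieces, a ``vertical'' contribution $|\ln A_{Z_n}|$ and a logarithmic ``horizontal'' contribution involving $|B_{Z_n}|/A_{Z_n}$, and to show that each divided by $n$ vanishes almost surely. The vertical piece will be handled by the strong law of large numbers applied to $\lambda(Z_n)$, while the horizontal piece will be controlled by coupling the telescoping formula for $B_{Z_n}$ with a Borel--Cantelli / Cesaro style bound on the maximum of i.i.d.\ partial sums.

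First I would set up the decomposition. From the calculation in the proof of Lemma \ref{lem:negativedrifthyp} we have $\pi_{\mathbb{H}}(Z_n)=A_{Z_n}\,i+B_{Z_n}$ with $A_{Z_n}=(q/p)^{\lambda(Z_n)}$ and $B_{Z_n}=\sum_{k=1}^{n}A_{X_1}\cdots A_{X_{k-1}}\,B_{X_k}$. Since $\pi_{\mathbb{H}}(Z_0)=i$, the triangle inequality together with the explicit distance formula for two points sharing an imaginary part gives
\begin{equation*}
  \dhyp\bigl(i,\,A_{Z_n}\,i+B_{Z_n}\bigr)\,\leq\,|\ln A_{Z_n}|+\arcosh\!\left(1+\tfrac{B_{Z_n}^{2}}{2\,A_{Z_n}^{2}}\right)\,\leq\,|\ln A_{Z_n}|+2\ln\!\left(1+\tfrac{|B_{Z_n}|}{A_{Z_n}}\right).
\end{equation*}
The elementary bound $\ln(1+u/v)\leq\ln(1+u)+|\ln v|$ for $u\geq 0$, $v>0$ further simplifies this to at most $3\ln(q/p)\,|\lambda(Z_n)|+2\ln(1+|B_{Z_n}|)$, so it suffices to prove that $|\lambda(Z_n)|/n\to 0$ a.\,s.\ and that $\ln(1+|B_{Z_n}|)/n\to 0$ a.\,s.

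The first is immediate: the increments $\lambda(X_k)$ are i.\,i.\,d.\ with finite first moment (since $\ln A_{X_1}$ has finite second moment) and mean $\mathbb{E}(\lambda(X_1))=\delta=0$, so the strong law of large numbers yields $\lambda(Z_n)/n=S_n/n\to 0$ a.\,s., where $S_n:=\lambda(X_1)+\dotsb+\lambda(X_n)$. For the second, I would estimate $|B_{Z_n}|\leq n\cdot\max_{1\leq k\leq n}(q/p)^{S_{k-1}}|B_{X_k}|$ and apply $\ln(1+xy)\leq\ln(1+x)+\ln(1+y)$ to obtain
\begin{equation*}
  \ln(1+|B_{Z_n}|)\,\leq\,\ln(1+n)+\ln(q/p)\cdot\max_{1\leq k\leq n}|S_{k-1}|+\max_{1\leq k\leq n}\ln(1+|B_{X_k}|).
\end{equation*}
Divided by $n$, the first summand tends to $0$ trivially; the second tends to $0$ a.\,s.\ because $|S_k|/k\to 0$ a.\,s., which, combined with a standard ``$|S_k|\leq\varepsilon k$ past a random threshold'' argument, forces $\max_{k\leq n}|S_k|/n\to 0$; and the third tends to $0$ a.\,s.\ because the i.\,i.\,d.\ random variables $Y_k:=\ln(1+|B_{X_k}|)$ have finite first moment, so Borel--Cantelli applied to $\set{|Y_k|>\varepsilon k}{}$ gives $Y_k/k\to 0$ a.\,s., and the same Cesaro-type argument then yields $\max_{k\leq n}Y_k/n\to 0$ a.\,s.

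The main obstacle, as far as I can see, is not a single conceptual step but keeping the logarithmic bookkeeping tight enough that the crude majorisation $|B_{Z_n}|\leq n\cdot\max_{k}(q/p)^{S_{k-1}}|B_{X_k}|$ only costs an additional $\ln n$ (which is harmless after division by $n$). Note that the sketch above actually only uses finite first moments of $\lambda(X_1)$ and of $\ln(1+|B_{X_1}|)$; the stronger second- and $(2+\varepsilon)$-th-moment assumptions stated in the lemma are presumably reserved for later applications (in particular, for checking the gauge decay required in Kaimanovich's strip criterion).
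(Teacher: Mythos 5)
Your proof is correct, and it takes a genuinely different and considerably more elementary route than the paper's. The paper splits the distance at the intermediate point $A_{Z_{\tau(m)}}\cdot i+B_{Z_{n}}$, where $\tau(m)$ is the last ladder time of the level walk before time $n$, and controls the horizontal term by telescoping over ladder epochs, applying the strong law of large numbers to the i.\,i.\,d.\ epoch contributions $\ln\big(1+\sum_{k=\tau(j)+1}^{\tau(j+1)}|B_{X_{k}}|\big)$ (integrable by Lemma \ref{lem:elie}) and using $m/\tau(m)\to 0$. That is precisely where the stated hypotheses are consumed: the finite second moment of $\ln(A_{X_{1}})$ yields the tail asymptotics $\mathbb{P}(\tau>k)\sim\text{const}\cdot k^{-1/2}$, and the finite $(2+\varepsilon)$-th moment of $\ln(1+|B_{X_{1}}|)$ is what makes Lemma \ref{lem:elie} work; so your closing guess that these assumptions are ``reserved for later applications'' is not accurate --- in the paper they are used only in the proof of this very lemma, and they propagate to the driftless cases of Lemma \ref{lem:zerodrifttree} and Theorems \ref{thm:convergence} and \ref{thm:identification} only through it. Your argument replaces all of this by the crude majorisation $|B_{Z_{n}}|\leq n\cdot\max_{1\leq k\leq n}(q/p)^{S_{k-1}}|B_{X_{k}}|$, whence $\ln(1+|B_{Z_{n}}|)\leq\ln(1+n)+\ln(q/p)\cdot\max_{0\leq j<n}|S_{j}|+\max_{1\leq k\leq n}\ln(1+|B_{X_{k}}|)$, and each summand is $o(n)$ almost surely: the second by the strong law for the mean-zero walk $S_{j}$, the third by Borel--Cantelli since $\mathbb{E}\ln(1+|B_{X_{1}}|)<\infty$, each combined with the deterministic observation that $c_{k}/k\to0$ forces $\max_{k\leq n}c_{k}/n\to0$. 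All steps check out; the only point worth writing down explicitly is the elementary inequality $\arcosh(1+u^{2}/2)\leq 2\ln(1+u)$ for $u\geq0$ (equivalently $\sqrt{1+u^{2}/4}\leq 1+u/2$), which you use implicitly. As you note, this establishes the lemma under the weaker hypothesis that $\ln(A_{X_{1}})$ and $\ln(1+|B_{X_{1}}|)$ merely have finite first moments --- the same assumptions as in Brofferio's theorem quoted just before the lemma. What the ladder-time machinery buys instead is the finer statement that the horizontal displacement measured relative to the running maximal height is controlled by a Ces\`aro average of integrable epoch contributions, which is the structure underlying convergence to $\infty\in\partial\mathbb{H}$; but for sublinear speed alone your bound suffices.
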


The proof is based on ideas that go back to \'Elie \cite[Lemme 5.49]{elie-1982} and have also been used in \cite[Proposition 4b]{ckw-1994}. We first adapt these ideas to our situation in Lemma \ref{lem:elie} and then deduce Lemma~\ref{lem:zerodrifthyp}. By assumption, there is no vertical drift so that the pointwise projection $\lambda(Z)=(\lambda(Z_{0}),\lambda(Z_{1}),\dotsc)$ to $\mathbb{Z}$ is recurrent. In particular, we know that there exists a.\,s.~a strictly increasing sequence $\tau(0),\tau(1),\dotsc$ given by $\tau(0):=0$ and $\tau(n):=\inf\,\set{k\in\mathbb{N}}{k>\tau(n-1)~\text{and}~\lambda(Z_{k})>\lambda(Z_{\tau(n-1)})}$ for all $n\in\mathbb{N}$. We call $\tau(n)$ the \emph{$n$-th ladder time}, see Figure \ref{fig:upwardstopping} for an illustration, and write $\tau:=\tau(1)$ for short.

\begin{figure}
\begin{center}
  \includegraphics{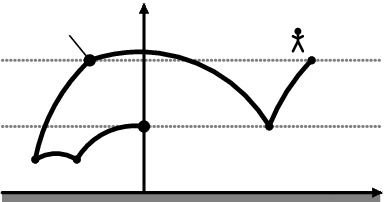}
  \begin{picture}(0,0)
    \put(-116,26){$\pi_{\mathbb{H}}(Z_{\tau(0)})$}
    \put(-184,86){$\pi_{\mathbb{H}}(Z_{\tau(1)})$}
  \end{picture}
\end{center}
\caption{The first ladder times $\tau(0)$ and $\tau(1)$.}
\label{fig:upwardstopping}
\end{figure}

\begin{lem}
\label{lem:elie}
Under the same assumptions as in Lemma \ref{lem:zerodrifthyp}, the random variable $\ln(1+\sum_{k=1}^{\tau}|B_{X_{k}}|)$ has finite first moment.
\end{lem}

\vspace{-10pt}\begin{proof}
We adapt the proof of \cite[Lemme 5.49]{elie-1982} to our situation. Pick an $\varepsilon>0$ that satisfies the requirements of Lemmas \ref{lem:zerodrifthyp} and \ref{lem:elie} and let $\beta:=\frac{1}{2+\varepsilon}$. Since $\ln(A_{X_{1}})$ has finite second moment, we know that $\lambda(X_{1})$ has finite second moment and therefore $\mathbb{P}(\tau>k)$ is asymptotically equivalent to $\text{const}\cdot k^{-\frac{1}{2}}$ with a strictly positive constant, see \cite[\S5.44]{elie-1982} referring to \cite[p.\,415]{feller-1971}. In other words, the quotient of $\mathbb{P}(\tau>k)$ and $\text{const}\cdot k^{-\frac{1}{2}}$ converges to $1$. Thus,
\begin{equation*}
  \int\tau^{\beta}\d{\mathbb{P}}\leq\int\left\lceil\tau^{\beta}\right\rceil\d{\mathbb{P}}=\sum_{k=1}^{\infty}\mathbb{P}\left(\left\lceil\tau^{\beta}\right\rceil\geq k\right)=\sum_{k=0}^{\infty}\underbrace{\mathbb{P}\left(\tau>k^{\frac{1}{\beta}}\right)}_{\sim\;\text{const}\,\cdot\,k^{-\left(1+\frac{\varepsilon}{2}\right)}}.
\end{equation*}
In particular, there is a $k_{0}\in\mathbb{N}$ such that for all $k\geq k_{0}$ the inequality $\mathbb{P}\big(\tau>k^{\frac{1}{\beta}}\big)<k^{-\left(1+\frac{\varepsilon}{4}\right)}$ holds. Since $\sum_{k=k_{0}}^{\infty}k^{-\left(1+\frac{\varepsilon}{4}\right)}$ is finite, we know that $\int\tau^{\beta}\d{\mathbb{P}}$ is finite. By construction, the increments $\tau(1)-\tau(0)$, $\tau(2)-\tau(1),\dotsc$ are i.\,i.\,d.~(=\,independent and identically distributed), whence the fact that $0<\beta<1$, which implies that $(x+y)^{\beta}\leq x^{\beta}+y^{\beta}$, and the strong law of large numbers yield 
\begin{gather*}
  \frac{{\tau(n)}^{\beta}}{n}\leq\frac{(\tau(1)-\tau(0))^{\beta}+\dotsc+(\tau(n)-\tau(n-1))^{\beta}}{n}\,\xrightarrow[\text{\raisebox{.5ex}{a.\,s.}}]{~n\to\infty~}\,\mathbb{E}\left(\tau^{\beta}\right)\,,\implies\limsup_{n\to\infty}\frac{{\tau(n)}^{\beta}}{n}<\infty~~\text{a.\,s.} \label{eqn:preliminary} \tag{$\ast$}
\end{gather*}
Now, we are prepared for the main argument. The sums $\sum_{k=\tau(0)+1}^{\tau(1)}|B_{X_{k}}|,\;\sum_{k=\tau(1)+1}^{\tau(2)}|B_{X_{k}}|,\;\dotsc$ are i.\,i.\,d., they are non-negative and not a.\,s.~equal to zero. Hence, in view of \cite[Lemme 5.23]{elie-1982}, the following equivalence holds:
\begin{equation*}
  \int\ln\left(1+\sum_{k=1}^{\tau}|B_{X_{k}}|\right)\d{\mathbb{P}}<\infty\iff\underbrace{\limsup_{n\to\infty}\left(\sum_{k=\tau(n-1)+1}^{\tau(n)}|B_{X_{k}}|\right)^{\frac{1}{n}}}_{=:\;K}<\infty~~\text{a.\,s.}
\end{equation*}
It thus suffices to verify the right-hand side. In order to do so, we would like to estimate
\begin{equation*}
  K\leq\limsup_{n\to\infty}\exp\left(\frac{\ln\left(1+\sum_{k=1}^{\tau(n)}|B_{X_{k}}|\right)}{n}\right)\leq\exp\left(\underbrace{\limsup_{n\to\infty}\frac{\ln\left(1+\sum_{k=1}^{\tau(n)}|B_{X_{k}}|\right)}{{\tau(n)}^{\beta}}}_{=:\;L}\,\cdot\,\underbrace{\vphantom{\frac{\ln\left(1+\sum_{k=1}^{\tau(n)}|B_{X_{k}}|\right)}{{\tau(n)}^{\beta}}}\limsup_{n\to\infty}\frac{{\tau(n)}^{\beta}}{n}}_{<\,\infty~\text{a.\,s.~(\ref{eqn:preliminary})}}\right)\,.
\end{equation*}
A priori, it might be the case that $L$ is infinite and the second factor in the rightmost term is $0$, in which case the product would not make any sense. We claim that $L$ is a.\,s.~finite, which does not only legitimate the above estimate but also completes the proof. Indeed, observe that
\begin{align*}
  L &\leq\limsup_{n\to\infty}\frac{\ln\left(1+\tau(n)\cdot\max\set{|B_{X_{k}}|}{1\leq k\leq\tau(n)}\right)}{{\tau(n)}^{\beta}} \\[10pt] &\leq\underbrace{\limsup_{n\to\infty}\frac{\ln\left(\tau(n)\right)}{{\tau(n)}^{\beta}}}_{=\;0}\,+\limsup_{n\to\infty}\frac{\ln\left(1+\max\set{|B_{X_{k}}|}{1\leq k\leq\tau(n)}\right)}{{\tau(n)}^{\beta}} \\ &=\limsup_{n\to\infty}\left(\frac{\max\left\{\,\ln\left(1+|B_{X_{k}}|\right)^{\frac{1}{\beta}}\,:\,1\leq k\leq\tau(n)\,\right\}}{\tau(n)}\right)^{\beta}\leq\limsup_{n\to\infty}\left(\underbrace{\frac{\sum_{k=1}^{\tau(n)}\ln\left(1+|B_{X_{k}}|\right)^{\frac{1}{\beta}}}{\tau(n)}}_{=:\;M_{n}}\right)^{\beta}\,.
\end{align*}
Now, recall that $\frac{1}{\beta}=2+\varepsilon$. By the strong law of large numbers, $M_{n}$ converges a.\,s.~to $\mathbb{E}\big(\ln(1+|B_{X_{1}}|)^{\frac{1}{\beta}}\big)$. This implies that $\limsup_{n\to\infty}{M_{n}}^{\beta}$ is a.\,s.~finite, and so is $L$.
\end{proof}

\vspace{-10pt}\begin{proof}[Proof of Lemma~\ref{lem:zerodrifthyp}]
Recall from the proof of Lemma \ref{lem:elie}, that $\mathbb{P}(\tau>k)$ is asymptotically equivalent to $\text{const}\cdot k^{-\frac{1}{2}}$ with a strictly positive constant. In particular, there is a $k_{0}\in\mathbb{N}$ such that for all $k\geq k_{0}$ the inequality $\mathbb{P}(\tau>k)>k^{-1}$ holds, whence
\begin{equation*}
  \int\tau\d{\mathbb{P}}=\sum_{k=1}^{\infty}\mathbb{P}(\tau\geq k)=\sum_{k=0}^{\infty}\mathbb{P}(\tau>k)\geq\sum_{k=k_{0}}^{\infty}k^{-1}=\infty\,.
\end{equation*}
Since $\tau(1)-\tau(0),\tau(2)-\tau(1),\dotsc$ are i.\,i.\,d.~and non-negative, we may deduce from the strong law of large numbers by truncating the random variables, see e.\,g.~\cite[p.\,309, Lemma 6]{roussas-2014}, that
\begin{equation*}
  \label{eqn:truncating}
  \frac{\tau(n)}{n}=\frac{(\tau(1)-\tau(0))+(\tau(2)-\tau(1))+\dotsc+(\tau(n)-\tau(n-1))}{n}\,\xrightarrow[\text{\raisebox{.5ex}{a.\,s.}}]{~n\to\infty~}\,\infty\quad\text{and}\quad\frac{n}{\tau(n)}\,\xrightarrow[\text{\raisebox{.5ex}{a.\,s.}}]{~n\to\infty~}\,0\,. \tag{$\ast$}
\end{equation*}
This convergence can be used to estimate the distance between $\pi_{\mathbb{H}}(Z_{0})$ and $\pi_{\mathbb{H}}(Z_{n})$ from above. Indeed, for every $n\in\mathbb{N}_{0}$ let $m=m(n)\in\mathbb{N}_{0}$ be the unique element with $\tau(m)\leq n<\tau(m+1)$. It exists a.\,s.~because the ladder times $0=\tau(0)<\tau(1)<\dotsc$ do. Now, observe that
\begin{equation*}
  \frac{\dhyp(\pi_{\mathbb{H}}(Z_{0}),\pi_{\mathbb{H}}(Z_{n}))}{n}\leq\underbrace{\frac{\dhyp(i,A_{Z_{\tau(m)}}\cdot i)}{n}}_{\circled{1}}+\underbrace{\frac{\dhyp(A_{Z_{\tau(m)}}\cdot i,A_{Z_{\tau(m)}}\cdot i+B_{Z_{n}})}{n}}_{\circled{2}}+\underbrace{\frac{\dhyp(A_{Z_{\tau(m)}}\cdot i+B_{Z_{n}},A_{Z_{n}}\cdot i+B_{Z_{n}})}{n}}_{\circled{3}}\,.
\end{equation*}
The meaning of the three summands is illustrated Figure \ref{fig:distancedecomposition}. We will consider them separately and show that each of them converges a.\,s.~to $0$. For \circled{1} and \circled{3}, this is straightforward. Indeed, 
\begin{figure}
\begin{center}
  \includegraphics{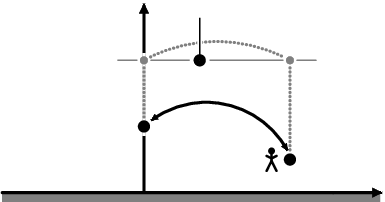}
  \begin{picture}(0,0)
    \put(-132,49){\circled{1}}
    \put(-74,80.5){\circled{2}}
    \put(-45,41){\circled{3}}
    \put(-109,94){$\pi_{\mathbb{H}}(Z_{\tau(m)})$}
    \put(-43,15){$\pi_{\mathbb{H}}(Z_{n})$}
    \put(-114,31){$\pi_{\mathbb{H}}(Z_{0})$}
  \end{picture}
\end{center}
\caption{Estimation of the distance between $\pi_{\mathbb{H}}(Z_{0})$ and $\pi_{\mathbb{H}}(Z_{n})$.}
\label{fig:distancedecomposition}
\end{figure}
\begin{equation*}
  \circled{1}=\frac{|\ln(A_{Z_{\tau(m)}})|}{n}\leq\frac{|\ln(A_{Z_{\tau(m)}})|}{\tau(m)}=\ln\left(\frac{q}{p}\right)\cdot\left|\frac{\lambda(X_{1})+\dotsc+\lambda(X_{\tau(m)})}{\tau(m)}\right|\,\xrightarrow[\text{\raisebox{.5ex}{a.\,s.}}]{~n\to\infty~}\,\ln\left(\frac{q}{p}\right)\cdot|\mathbb{E}(\lambda(X_{1}))|=\ln\left(\frac{q}{p}\right)\cdot|\delta|=0
\end{equation*}
and similarly
\begin{equation*}
  \circled{3}\leq\frac{\dhyp(A_{Z_{\tau(m)}}\cdot i,i)}{n}+\frac{\dhyp(i,A_{Z_{n}}\cdot i)}{n}=\circled{1}+\frac{|\ln(A_{Z_{n}})|}{n}=\circled{1}+\ln\left(\frac{q}{p}\right)\cdot\left|\frac{\lambda(X_{1})+\dotsc+\lambda(X_{n})}{n}\right|\,\xrightarrow[\text{\raisebox{.5ex}{a.\,s.}}]{~n\to\infty~}\,0\,.
\end{equation*}
For \circled{2}, recall from the proof of Lemma \ref{lem:negativedrifthyp} that $B_{Z_{n}}=\sum_{k=1}^{n}A_{X_{1}}\cdot\dotsc\cdot A_{X_{k-1}}\cdot B_{X_{k}}$ and observe that for all $\ell,m\in\mathbb{N}_{0}$ with $\tau(m)\leq\ell\leq\tau(m+1)$ the following holds
\begin{align*}
  \label{eqn:observation}
  \frac{|B_{Z_{\ell}}-B_{Z_{\tau(m)}}|}{A_{Z_{\tau(m)}}} &\leq\frac{A_{X_{1}}\cdot\dotsc\cdot A_{X_{\tau(m)}}\cdot\sum_{k=\tau(m)+1}^{\ell}A_{X_{\tau(m)+1}}\cdot\dotsc\cdot A_{X_{k-1}}\cdot|B_{X_{k}}|}{A_{X_{1}}\cdot\dotsc\cdot A_{X_{\tau(m)}}} \\[2pt] &=\sum_{k=\tau(m)+1}^{\ell}\underbrace{A_{X_{\tau(m)+1}}\cdot\dotsc\cdot A_{X_{k-1}}}_{\leq\,1}\cdot\,|B_{X_{k}}|\leq\sum_{k=\tau(m)+1}^{\ell}|B_{X_{k}}|~~\text{a.\,s.} \tag{$\ast\!\ast$}
\end{align*}
Hence, using that $A_{Z_{\tau(0)}}<A_{Z_{\tau(1)}}<\dotsc<A_{Z_{\tau(m)}}$, we obtain
\begin{align*}
  \circled{2} &=\frac{1}{n}\cdot\arcosh\left(1+\frac{1}{2}\cdot\left(\frac{|B_{Z_{n}}|}{A_{Z_{\tau(m)}}}\right)^{2}\right)=\frac{1}{n}\cdot\ln\left(1+\frac{1}{2}\cdot\left(\frac{|B_{Z_{n}}|}{A_{Z_{\tau(m)}}}\right)^{2}+\sqrt{\left(1+\frac{1}{2}\cdot\left(\frac{|B_{Z_{n}}|}{A_{Z_{\tau(m)}}}\right)^{2}\right)^{2}-1}\;\right) \\[3pt] &\leq\frac{1}{n}\cdot\left(\ln(2)+\ln\left(1+\left(\frac{|B_{Z_{n}}|}{A_{Z_{\tau(m)}}}\right)^{2}\right)\right)\leq\frac{1}{n}\cdot\left(\ln(2)+2\cdot\ln\left(1+\frac{|B_{Z_{n}}|}{A_{Z_{\tau(m)}}}\right)\right) \\[10pt] &\leq\frac{1}{n}\cdot\left(\ln(2)+2\cdot\ln\left(1+\frac{|B_{Z_{\tau(1)}}-B_{Z_{\tau(0)}}|}{A_{Z_{\tau(0)}}}+\frac{|B_{Z_{\tau(2)}}-B_{Z_{\tau(1)}}|}{A_{Z_{\tau(1)}}}+\dotsc+\frac{|B_{Z_{\tau(m)}}-B_{Z_{\tau(m-1)}}|}{A_{Z_{\tau(m-1)}}}+\frac{|B_{Z_{n}}-B_{Z_{\tau(m)}}|}{A_{Z_{\tau(m)}}}\right)\right)\,,
\end{align*}
which allows us to use that $\tau(m)\leq n<\tau(m+1)$ and apply (\ref{eqn:observation}), to apply (\ref{eqn:truncating}), and to obtain \renewcommand\qedsymbol{$\square$}
\begin{align*}
\dotsc&\leq\frac{1}{n}\cdot\left(\ln(2)+2\cdot\ln\left(1+\sum_{k=1}^{n}|B_{X_{k}}|\right)\right)\leq\frac{1}{n}\cdot\left(\ln(2)+2\cdot\ln\left(1+\sum_{k=1}^{\tau(m+1)}|B_{X_{k}}|\right)\right) \\[8pt] &\leq\underbrace{\frac{\ln(2)}{n\vphantom{()}}}_{\to\;0}+\,2\cdot\underbrace{\frac{\ln\left(1+\sum_{k=\tau(0)+1}^{\tau(1)}|B_{X_{k}}|\right)+\dotsc+\ln\left(1+\sum_{k=\tau(m)+1}^{\tau(m+1)}|B_{X_{k}}|\right)}{m+1\vphantom{()}}}_{\to\;\mathbb{E}(\ln(1+\sum_{k=1}^{\tau}|B_{X_{k}}|))~\text{a.\,s.~by Lemma \ref{lem:elie}}}\cdot\underbrace{\frac{m+1}{\tau(m)}}_{\substack{\to\;0 \\ \text{a.\,s.}}}\cdot\underbrace{\frac{\tau(m)}{n\vphantom{()}}}_{\substack{\leq\,1 \\ \text{a.\,s.}}}\,\xrightarrow[\text{\raisebox{.5ex}{a.\,s.}}]{~n\to\infty~}\,0\,.\qedhere
\end{align*}
\end{proof}


\subsection{Convergence to the space of ends of the Bass--Serre tree}
\label{sub:cttsoeotbst}

Even though the projections $\pi_{T}(Z_{n})$ do not need to satisfy the Markov property, we are still able to show that they converge a.\,s.~to a random end by applying a result of Cartwright and Soardi, \cite[p.\,820, Theorem]{cs-1989}, which is based on a technique developed by Furstenberg in \cite{furstenberg-1963} and \cite{furstenberg-1971}. The authors consider a random walk $\Phi=(\Phi_{0},\Phi_{1},\dotsc)$ on the automorphism group of a locally finite and infinite tree and prove under a mild assumption on the probability measure that the sequence of vertices obtained by evaluating each automorphism $\Phi_{n}$ at a fixed vertex $v$ converges a.\,s.~to a random end. Their assumption is that the random walk is driven by a \emph{regular Borel probability measure} whose support is \emph{not contained in any amenable subgroup}. However, the proof of \cite[p.\,820, Theorem]{cs-1989} shows that it suffices to assume that the support is not contained in any amenable \emph{closed} subgroup. Given that $1<p<q$, this result can be immediately applied to our setting.

\begin{lem}
\label{lem:anytree}
Let $1<p<q$. Then, the projections $\pi_{T}(Z_{n})$ converge a.\,s.~to a random end $\xi\in\partial T$.
\end{lem}

\vspace{-10pt}\begin{proof}
Since the group $G$ acts on the tree $T$, we may consider the group homomorphism $\varphi:G\to\Aut(T)$ associated to this action. The automorphism group $\Aut(T)$ is endowed with the topology of pointwise convergence. Since $G$ is discrete, $\varphi$ is certainly measurable. The pointwise images $(\varphi(Z_{0}),\varphi(Z_{1}),\dotsc)$ constitute a random walk on $\Aut(T)$ that satisfies the assumption of \cite[p.\,820, Theorem]{cs-1989}. Indeed, the random walk on $\Aut(T)$ is driven by the pushforward \emph{Borel probability measure} $\varphi_{\ast}(\mu)$. Because $\Aut(T)$ is a locally compact Hausdorff space with a countable base, see e.\,g.~\cite[\S 2]{woess-1991}, every Borel probability measure on $\Aut(T)$ is \emph{regular}, see e.\,g.~\cite[Proposition 7.2.3]{cohn-2013}, and so is $\varphi_{\ast}(\mu)$. It remains to show that the support of the latter is not contained in any amenable closed subgroup.

Observe that the support of $\varphi_{\ast}(\mu)$ generates the subgroup $\varphi(G)\leq\Aut(T)$. Since $\varphi(G)$ acts transitively on~$T$ and does not fix an end, every closed subgroup that contains $\varphi(G)$ has these two properties as well and is therefore not amenable, see \cite[Theorem 2]{nebbia-1988}. In other words, the support of $\varphi_{\ast}(\mu)$ is \emph{not contained in any amenable closed subgroup} of $\Aut(T)$. Now, \cite[p.\,820, Theorem]{cs-1989} yields that the sequence obtained by evaluating each automorphism $\varphi(Z_{n})$ at a fixed vertex $v$ converges a.\,s.~to a random end $\xi\in\partial T$. Since $\pi_{T}(Z_n)=Z_{n}B=\varphi(Z_{n})(B)$, setting $v:=B$ completes the proof.
\end{proof}


\subsection{\texorpdfstring{Construction of $\mu$-boundaries}{Construction of mu-boundaries}}
\label{sub:comb}

Resuming Sections \ref{sub:cttbothp} and \ref{sub:cttsoeotbst}, we may formulate the following theorem.

\begin{thm}[``convergence theorem'']
Let $Z=(Z_{0},Z_{1},\dotsc)$ be a random walk on a non-amenable Baumslag--Solitar group $G=\BS(p,q)$ with $1<p<q$ and increments $X_{1},X_{2},\dotsc$ of finite first moment. Then, the projections $\pi_{T}(Z_{n})$ converge a.\,s.~to a random end $\xi\in\partial T$. Moreover, depending on the vertical drift $\delta$, we distinguish three cases:
\begin{enumerate}
\item If $\delta>0$, then the projections $\pi_{\mathbb{H}}(Z_{n})$ converge a.\,s.~to $\infty\in\partial\mathbb{H}$.
\item If $\delta<0$, then the projections $\pi_{\mathbb{H}}(Z_{n})$ converge a.\,s.~to a random element $r\in\mathbb{R}=\partial\mathbb{H}\smallsetminus\sset{\infty}$. 
\item If $\delta=0$ and $\ln(A_{X_{1}})$ has finite second moment and there is an $\varepsilon>0$ such that $\ln(1+|B_{X_{1}}|)$ has finite $(2+\varepsilon)$-th moment, then projections $\pi_{\mathbb{H}}(Z_{n})$ have sublinear speed.
\end{enumerate} 
\end{thm}\pagebreak

So, let us assume that $1<p<q$ and that the increments $X_{1},X_{2},\dotsc$ have finite first moment. We may therefore consider the map $\bnd_{\partial T}:\Omega\to\partial T$ and, in the special case that $\delta<0$, also the map $\bnd_{\mathbb{R}}:\Omega\to\mathbb{R}$, defined almost everywhere, assigning to a trajectory $\omega=(x_{0},x_{1},\dotsc)\in\Omega$ the limit
\begin{equation*}
  \bnd_{\partial T}(\omega):=\lim_{n\to\infty}\pi_{T}(x_{n})\in\partial T \quad\text{and}\quad\bnd_{\mathbb{R}}(\omega):=\lim_{n\to\infty}\pi_{\mathbb{H}}(x_{n})\in\mathbb{R}\,.
\end{equation*}
The topological spaces $\partial T$ and $\mathbb{R}$ are endowed with their Borel $\sigma$-algebras $\mathcal{B}_{\partial T}$ and $\mathcal{B}_{\mathbb{R}}$. Even though the maps $\bnd_{\partial T}$ and $\bnd_{\mathbb{R}}$ are only defined almost everywhere, they are measurable in the sense that the preimages of measurable sets are measurable. Given $\bnd_{\partial T}$ and $\bnd_{\mathbb{R}}$, we may construct their product map $\bnd_{\partial T \times\mathbb{R}}:\Omega\to\partial T \times\mathbb{R}$. It is measurable with respect to the product $\sigma$-algebra $\mathcal{B}_{\partial T}\otimes\mathcal{B}_{\mathbb{R}}$. Because both $\partial T$ and $\mathbb{R}$ are metrisable and separable topological spaces, it is not hard to see that the product $\sigma$-algebra $\mathcal{B}_{\partial T}\otimes\mathcal{B}_{\mathbb{R}}$ agrees with the Borel $\sigma$-algebra $\mathcal{B}_{\partial T \times\mathbb{R}}$, see e.\,g.~\cite[Appendix M.10]{billingsley-1999}. The pushforward probability measures $\nu_{\partial T}:=(\bnd_{\partial T})_{\ast}(\mathbb{P})$ and $\nu_{\partial T\times\mathbb{R}}:=(\bnd_{\partial T\times\mathbb{R}})_{\ast}(\mathbb{P})$ on the respective measurable spaces are called the \emph{hitting measures}. Since $\partial T$ and $\mathbb{R}$, and therefore also $\partial{T}\times\mathbb{R}$, are Polish spaces, Example \ref{exa:polish} implies that $(\partial T ,\mathcal{B}_{\partial T},\nu_{\partial T})$ and $(\partial T\times\mathbb{R},\mathcal{B}_{\partial T\times\mathbb{R}},\nu_{\partial T\times\mathbb{R}})$ are Lebesgue--Rohlin spaces. The maps $\bnd_{\partial{T}}$ and $\bnd_{\partial T\times\mathbb{R}}$ are homomorphisms and, by construction, they are \circled{1} $\sim$-invariant.

Each of the topological spaces $\partial T$ and $\mathbb{R}$ is endowed with a continuous $G$-action. The one on $\partial T$ is induced by the left-multiplication $g(hB):=(gh)B$ on~$T$. More precisely, recall that ends are infinite reduced paths that start in $B$. The pointwise left-multiplication maps every such path $\xi\in\partial T$ to some other path that need not start in $B$ anymore. The end $g\xi\in\partial T$ is obtained by connecting~$B$ to the initial vertex of this path and reducing the concatenation if necessary. The $G$-action on $\mathbb{R}$ is induced by the isometric $G$-action on~$\mathbb{H}$ that we addressed in Section~\ref{sub:ptthp}. In light of the representation of the elements of $\isom(\mathbb{H})$ as rational functions, we can also evaluate them on the boundary $\partial\mathbb{H}$ and finally observe that the isometries associated to the elements of~$G$ leave the subset $\mathbb{R}\subseteq\partial\mathbb{H}$ invariant. The $G$-actions on $\partial T$ and~$\mathbb{R}$ induce a componentwise $G$-action on the product $\partial T\times\mathbb{R}$, which is also continuous.

All three $G$-actions are measurable with respect to the Borel $\sigma$-algebras and, since they map null sets $A$ to null sets $gA$, they remain measurable when we proceed to the completions. In particular, the spaces $(\partial T ,\mathcal{B}_{\partial T},\nu_{\partial T})$ and $(\partial T\times\mathbb{R},\mathcal{B}_{\partial T\times\mathbb{R}},\nu_{\partial T\times\mathbb{R}})$ are endowed with measurable $G$-actions and, by construction, the maps $\bnd_{\partial{T}}$ and $\bnd_{\partial T\times\mathbb{R}}$ are \circled{2} $G$-equivariant. We have thus derived the following lemma.

\begin{lem}
\label{lem:muboundary}
For any vertical drift $\delta$, in particular for $\delta\geq 0$, the Lebesgue--Rohlin space $(\partial T ,\mathcal{B}_{\partial T},\nu_{\partial T})$ endowed with the homomorphism $\bnd_{\partial T}:\Omega\to\partial T$ is a $\mu$-boundary. If $\delta<0$, then $(\partial T\times\mathbb{R},\mathcal{B}_{\partial T\times\mathbb{R}},\nu_{\partial T\times\mathbb{R}})$ endowed with $\bnd_{\partial T\times\mathbb{R}}:\Omega\to\partial T\times\mathbb{R}$ is also a $\mu$-boundary.
\end{lem}

Before we will use Kaimanovich's strip criterion to show that the above $\mu$-boundaries are maximal, we analyse the hitting measures. This requires a preliminary observation.

\begin{lem}
\label{lem:minimal}
The $G$-actions on $\partial T$ and $\mathbb{R}$, as well as the componentwise $G$-action on the product $\partial T\times\mathbb{R}$, are topologically minimal, i.\,e.~each orbit is dense. Because all three spaces are infinite and Hausdorff, this implies that each orbit is infinite.
\end{lem}

\vspace{-10pt}\begin{proof}
Consider the $G$-action on $\partial T$. Choose an end $\xi\in\partial T$ and a non-empty open subset $S\subseteq\partial T$. We shall construct an element $g\in G$ such that $g\xi\in S$. Because $S$ is non-empty and open, all ends with a certain finite initial piece belong to $S$. In other words, there is an element $h\in G$ such that all ends that start in $B$ and traverse the vertex $hB$ are contained in $S$. If we set $g:=h\in G$, then the end $g\xi\in\partial T$ will have the correct finite initial piece unless cancellation takes place. In the latter case, we set $g:=hb\in G$ instead. Since $|p|\neq 1$ and $|q|\neq 1$, cancellation will take place in at most one of the two cases, which proves the first assertion.

Next, consider the $G$-action on $\mathbb{R}$, an element $r\in\mathbb{R}$, and a non-empty open subset $S\subseteq\mathbb{R}$. Because $S$ is non-empty and open, there are $s\in\mathbb{R}$ and $\varepsilon>0$ such that the interval $(s-\varepsilon,s+\varepsilon)$ is contained in~$S$. We assume that $1<p<q$, so we can find integers $k_{1},k_{2}\in\mathbb{Z}$ with $k_{1}<0$ such that $q^{k_{1}}p^{-k_{1}}<\varepsilon$ and the elements $q^{k_{1}}p^{-k_{1}}(r+k_{2})$ and $q^{k_{1}}p^{-k_{1}}(r+k_{2}+1)$ are both contained in $(s-\varepsilon,s+\varepsilon)$. Therefore, we set $g:=a^{k_{1}}b^{k_{2}}\in G$ to obtain
\begin{equation*}
  gr=a^{k_{1}}b^{k_{2}}r=a^{k_{1}}(r+k_{2})=q^{k_{1}}p^{-k_{1}}(r+k_{2})\in(s-\varepsilon,s+\varepsilon)\subseteq S\,.
\end{equation*}
Finally, consider the $G$-action on the product $\partial T\times\mathbb{R}$, an element $(\xi,r)\in\partial T\times\mathbb{R}$, and a non-empty open subset $S\subseteq\partial T\times\mathbb{R}$. Because $S$ is non-empty and open, there are non-empty and open subsets $S_{1}\subseteq\partial T$ and $S_{2}\subseteq\mathbb{R}$ such that $S_{1}\times S_{2}$ is contained in $S$. We shall now construct an element $g\in G$ such that both $g\xi\in S_{1}$ and $gr\in S_{2}$. Look at the tree component first. We already know that there is an element $h\in G$ such that all ends that start in $B$ and traverse the vertex $hB$ are contained in $S_{1}$. Let $k_{0}\in\sset{0,1}$, whichever ensures that the reduced path from $B$ to the vertex $hb^{k_{0}}a^{-1}B$ traverses the vertex $hB$. Now, look at the real component. We can find integers $k_{1},k_{2}\in\mathbb{Z}$ with $k_{1}<0$ such that the elements $a^{k_{1}}b^{k_{2}}r$ and $a^{k_{1}}b^{k_{2}+1}r$ are both contained in $(hb^{k_{0}})^{-1}S_{2}$. Back to the tree component, we choose $k_{3}\in\sset{0,1}$ such that the end $hb^{k_{0}}a^{k_{1}}b^{k_{2}+k_{3}}\xi$ traverses the vertex $hb^{k_{0}}a^{k_{1}}B$. Then, by construction, it also traverses the vertex $hB$. We set $g:=hb^{k_{0}}a^{k_{1}}b^{k_{2}+k_{3}}\in G$ to obtain $g\xi\in S_{1}$ and $gr\in S_{2}$.
\end{proof}

Given Lemma~\ref{lem:minimal} and the $\mu$-stationarity and quasi-invariance of the hitting measures, it is well-known that the latter are non-atomic and have full support. Indeed, if there were atoms, then we could choose an atom~$\xi$ of maximal mass. Because the respective hitting measure $\nu$ is $\mu$-stationary, the value $\nu(\xi)$ is a convex combination of all values $\nu(g^{-1}\xi)$ with $g\in\supp(\mu)$. Therefore, each $\nu(g^{-1}\xi)$ must be equal to~$\nu(\xi)$. Iteration of this procedure yields that the equality does not only hold for every $g\in\supp(\mu)$ but also for every $g$ in the semigroup generated by $\supp(\mu)$, i.\,e.~for every $g\in G$. Since the orbit $G\xi$ is infinite, this contradicts the finiteness of the hitting measure $\nu$. Concerning the assertion of full support, if there was a non-empty open null set~$S$, then the topological minimality of the $G$-action would imply that the translates $gS$ with $g\in G$ form a countable covering of the whole space with null sets, which is a contradiction. For further details, see e.\,g.~\cite[Lemma 3.4]{woess-1989} and \cite[Lemma 2.2 and 2.3]{mns-2017}.


\subsection{Proof of the main result}

We are now ready to prove our main result, Theorem \ref{thm:identification} announced in the \hyperlink{sec:introduction}{introduction}. It identifies the Poisson--Furstenberg boundary of random walks $Z=(Z_{0},Z_{1},\dotsc)$ on non-amenable Baumslag--Solitar groups $G=\BS(p,q)$ with $1<p<q$ and increments $X_{1},X_{2},\dotsc$ of finite first moment.

\begin{proof}[Proof of Theorem \ref{thm:identification}]
We seek to apply the strip criterion, Theorem \ref{thm:strip}. By Lemma \ref{lem:entropy}, the probability measure $\mu$ driving the random walk has finite entropy. By Lemma~\ref{lem:muboundary}, the Lebesgue--Rohlin space $(\partial T ,\mathcal{B}_{\partial T},\nu_{\partial T})$ is a $\mu$-boundary. If $Z$ has negative vertical drift $\delta<0$, then $(\partial T\times\mathbb{R},\mathcal{B}_{\partial T\times\mathbb{R}},\nu_{\partial T\times\mathbb{R}})$ is also a $\mu$-boundary. Let us consider the case $\delta<0$ first. We thus take the $\mu$-boundary $(\partial T\times\mathbb{R},\mathcal{B}_{\partial T\times\mathbb{R}},\nu_{\partial T\times\mathbb{R}})$ and the {$\check{\mu}$-boundary} $(\partial T ,\mathcal{B}_{\partial T},\hspace*{0.9pt}\check{\hspace*{-0.9pt}\nu}_{\partial T})$. Here, $\hspace*{0.9pt}\check{\hspace*{-0.9pt}\nu}_{\partial T}$ denotes the hitting measure of the pointwise projection of the random walk $\check{Z}=(\check{Z}_{0},\check{Z}_{1},\dotsc)$ driven by the reflected probability measure $\check{\mu}$ to the tree $T$.

Next, we define gauges and strips. Let $S:=\sset{a,b}\subseteq G$ be the standard generating set and define gauges $\mathcal{G}_{k}:=\set{g\in G}{\dwordS(1,g)\leq k}$, i.\,e.~the gauges exhaust the group $G$ with balls centred at the identity element $1\in G$ and the gauge function $|\cdot|=|\cdot|_{\mathcal{G}}$ is essentially the distance to $1$ with respect to the word metric $\dwordS$.

Since the hitting measures are non-atomic, see Section~\ref{sub:comb}, we know that $\hspace*{0.9pt}\check{\hspace*{-0.9pt}\nu}_{\partial T}\otimes\nu_{\partial T\times\mathbb{R}}$-almost every pair of points $(\xi_{-},(\xi_{+},r_{+}))\in\partial T \times(\partial T \times\mathbb{R})$ has distinct ends $\xi_{-},\xi_{+}\in\partial T$. In this situation, we may connect $\xi_{-}$ and~$\xi_{+}$ by a doubly infinite reduced path $v:\mathbb{Z}\to T$ and define the strip $\mathcal{S}(\xi_{-},(\xi_{+},r_{+}))$ as follows. It consists of all $g\in G$ in the full $\pi_{T}$-preimage of the path, i.\,e.~the image $\pi_{T}(g)$ is contained in~$v(\mathbb{Z})$, with the additional property that the real part $\Re(\pi_{\mathbb{H}}(g))$ has minimal distance to $r_{+}\in\mathbb{R}$ among all real parts $\Re(\pi_{\mathbb{H}}(h))$ with $h\in gB$, see the left-hand side of Figure \ref{fig:strips}. To all remaining pairs we assign the whole of $G$ as a strip. This way, the map $\mathcal{S}$ becomes measurable and $G$-equivariant. Since the hitting measures have full support, see Section~\ref{sub:comb} again, it is not hard to see that a random strip contains the identity element $1\in G$ with positive probability, i.\,e.~the map $\mathcal{S}$ satisfies the inequality of Remark~\ref{rem:removeg}. So, it suffices to verify the following convergence for an arbitrary pair $(\xi_{-},(\xi_{+},r_{+}))\in\partial T \times(\partial T\times\mathbb{R})$ with distinct ends $\xi_{-},\xi_{+}\in\partial T$,
\begin{figure}
\begin{center}
  \includegraphics{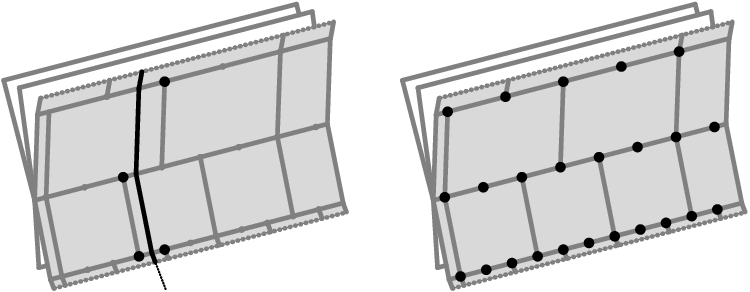}
  \begin{picture}(0,0)
    \put(-286,-7){$r_{+}\in\mathbb{R}$}
  \end{picture}
\end{center}
\caption{Strips for the cases $\delta\neq 0$ (left) and $\delta=0$ (right).}
\label{fig:strips}
\end{figure}
\begin{equation*}
  \frac{1}{n}\cdot\ln\left(\card\left(\mathcal{S}(\xi_{-},(\xi_{+},r_{+}))\cap\mathcal{G}_{|Z_{n}|}\right)\right)\,\xrightarrow[\text{\raisebox{.5ex}{a.\,s.}}]{~n\to\infty~}\,0\,.
\end{equation*}
The strip $\mathcal{S}(\xi_{-},(\xi_{+},r_{+}))$ intersects the gauge $\mathcal{G}_{|Z_{n}|}$ in at most $2\cdot|Z_{n}|+1$ many cosets from $G/B$ and each of them contains at most two elements of the strip. Therefore,
\begin{equation*}
  \frac{1}{n}\cdot\ln\left(\card\left(\mathcal{S}(\xi_{-},(\xi_{+},r_{+}))\cap\mathcal{G}_{|Z_{n}|}\right)\right)\leq\frac{\ln\big((2\cdot|Z_{n}|+1)\cdot 2\big)}{n}=\frac{\ln\big((2\cdot\dwordS(1,Z_{n})+1)\cdot 2\big)}{n}\,\xrightarrow[\text{\raisebox{.5ex}{a.\,s.}}]{~n\to\infty~}\,0\,.
\end{equation*}
In the final step above, we used that $X_{1}$ has finite first moment. Indeed,
\begin{equation*}
  \frac{1}{n}\cdot\dwordS(1,Z_{n})=\frac{1}{n}\cdot\dwordS(1,X_{1}\cdot\dotsc\cdot X_{n})\leq\frac{1}{n}\cdot\sum_{k=1}^{n}\dwordS(1,X_{k})\,\xrightarrow[\text{\raisebox{.5ex}{a.\,s.}}]{~n\to\infty~}\,\mathbb{E}(\dwordS(1,X_{1}))\,,
\end{equation*}
from where we may first conclude that the sequence $\frac{1}{n}\cdot\dwordS(1,Z_{n})$ is a.\,s.~bounded and second that the sequence $\frac{1}{n}\cdot\ln((2\cdot\dwordS(1,Z_{n})+1)\cdot 2)$ converges a.\,s.~to 0. So, we can finally apply the strip criterion and obtain that $(\partial T\times\mathbb{R},\mathcal{B}_{\partial T\times\mathbb{R}},\nu_{\partial T\times\mathbb{R}})$ is isomorphic to the Poisson--Furstenberg boundary. Vice versa, if $Z$ has positive vertical drift $\delta>0$, then the same argument yields that $(\partial T ,\mathcal{B}_{\partial T},\nu_{\partial T})$ is isomorphic to the Poisson--Furstenberg boundary.

It remains to consider the driftless case, i.\,e.~$\delta=0$. Then, both $\mu$ and $\check{\mu}$ are driftless and there is no natural candidate for a real number that determines the horizontal position of the strip. But the fact that the projections $\pi_{\mathbb{H}}(Z_{n})$ have sublinear speed, see Lemma~\ref{lem:zerodrifthyp}, allows us to solve this issue. More precisely, take the $\mu$-boundary $(\partial T ,\mathcal{B}_{\partial T},\nu_{\partial T})$ and the $\check{\mu}$-boundary $(\partial T ,\mathcal{B}_{\partial T},\hspace*{0.9pt}\check{\hspace*{-0.9pt}\nu}_{\partial T})$ and define gauges
\begin{equation*}
  \mathcal{G}_{k}:=\left\{\,g\in G\,:\,\dtree(\pi_{T}(1),\pi_{T}(g))\leq k^{2}~\text{and}~\dhyp(\pi_{\mathbb{H}}(1),\pi_{\mathbb{H}}(g))\leq k\right\}\,.
\end{equation*}
Again, $\hspace*{0.9pt}\check{\hspace*{-0.9pt}\nu}_{\partial T}\otimes\nu_{\partial T}$-almost every pair of points $(\xi_{-},\xi_{+})\in\partial T \times\partial T$ has distinct ends $\xi_{-},\xi_{+}\in\partial T$, which we may connect by a doubly infinite reduced path $v:\mathbb{Z}\to T$. Let $\mathcal{S}(\xi_{-},\xi_{+})$ be the full $\pi_{T}$-preimage of the path, i.\,e.~the set consisting of all $g\in G$ such that $\pi_{T}(g)$ is contained in $v(\mathbb{Z})$, see the right-hand side of Figure \ref{fig:strips}. Again, to all remaining pairs we assign the whole of $G$ as a strip. This way, the map $\mathcal{S}$ becomes measurable, $G$-equivariant, and satisfies the inequality of Remark \ref{rem:removeg}. Now, pick an arbitrary pair $(\xi_{-},\xi_{+})\in\partial T \times\partial T$ with distinct ends $\xi_{-},\xi_{+}\in\partial T$. We claim that
\begin{equation*}
  \frac{1}{n}\cdot\ln\left(\card\left(\mathcal{S}(\xi_{-},\xi_{+})\cap\mathcal{G}_{|Z_{n}|}\right)\right)\leq\frac{\ln\big((2\cdot|Z_{n}|^{2}+1)\cdot\exp(|Z_{n}|+2)\big)}{n}=\underbrace{\frac{\ln\left(2\cdot|Z_{n}|^{2}+1\right)}{n}}_{\circled{1}}+\underbrace{\frac{|Z_{n}|+2}{n}}_{\circled{2}}\,.
\end{equation*}
Indeed, the inequality holds for a similar reason as above. The strip $\mathcal{S}(\xi_{-},\xi_{+})$ intersects the gauge $\mathcal{G}_{|Z_{n}|}$ in at most $2\cdot|Z_{n}|^{2}+1$ many cosets from $G/B$. Slightly more involved is the observation that each of them contains at most $\exp(|Z_{n}|+2)$ many elements of the gauge. Fix a coset $gB$. The projections $\pi_{\mathbb{H}}(h)$ of the elements $h\in gB$ are located on a horizontal line $L\subseteq\mathbb{H}$ with imaginary part $y:=\Im(\pi_{\mathbb{H}}(g))$. One necessary condition for such an element $h\in gB$ to be contained in the gauge $\mathcal{G}_{|Z_{n}|}$ is that the projection $\pi_{\mathbb{H}}(h)$ is contained in the closed disc $D:=\set{z\in\mathbb{H}}{\dhyp(i,z)\leq|Z_{n}|}\subseteq\mathbb{H}$. If $L\cap D$ is empty, then the coset~$gB$ does not contain any element of the gauge and we are done. Otherwise, there is a unique $x\in\mathbb{R}$ with $x\geq 0$ such that $L\cap D$ is the horizontal line between $z_{1}:=-x+iy$ and $z_{2}:=x+iy$, see Figure \ref{fig:shortline}. The projections $\pi_{\mathbb{H}}(h)$ with $h\in gB$ have the property that the real parts $\Re(\pi_{\mathbb{H}}(h))$ and $\Re(\pi_{\mathbb{H}}(hb))$ differ exactly by $y$. So, $L\cap D$ contains at most $1+\frac{2x}{y}$ many of them. Let us now estimate $1+\frac{2x}{y}$ in terms of~$|Z_{n}|$. Since $z_{1}$ and $z_{2}$ are both contained in $D$, their distance is at most $2\cdot|Z_{n}|$. Therefore,
\begin{equation*}
  2\cdot|Z_{n}|\geq\dhyp(z_{1},z_{2})=\arcosh\left(1+\frac{|z_{2}-z_{1}|^{2}}{2\cdot\Im(z_{1})\Im(z_{2})}\right)=\arcosh\left(1+\frac{2x^{2}}{y^{2}}\right)\geq\ln\left(1+\frac{2x^{2}}{y^{2}}\right)\,.
\end{equation*}
And, in particular,
\begin{gather*}
\exp(2\cdot|Z_{n}|)>\frac{2x^{2}}{y^{2}}\,,\iff\exp\left(|Z_{n}|+\frac{1}{2}\cdot\ln(2)\right)>\frac{2x}{y}\,,\implies\exp(|Z_{n}|+2)>1+\frac{2x}{y}\,.
\end{gather*}
So, the coset $gB$ contains strictly fewer than $\exp(|Z_{n}|+2)$ elements of the gauge. We will now show that both summands \circled{1} and \circled{2} converge a.\,s.~to $0$, which will complete the proof. Let us first observe that
\begin{equation*}
  \label{eqn:gaugefunctionestimate} \tag{$\ast$}
  |Z_{n}|-1\leq\max\left\{\,\dhyp(\pi_{\mathbb{H}}(1),\pi_{\mathbb{H}}(Z_{n})),\sqrt{\dtree(\pi_{T}(1),\pi_{T}(Z_{n}))}\,\right\}\leq\max\left\{\,\dhyp(\pi_{\mathbb{H}}(1),\pi_{\mathbb{H}}(Z_{n})),\sqrt{\dwordS(1,Z_{n})}\,\right\}\,.
\end{equation*}
Concerning \circled{1}, we deduce from (\ref{eqn:gaugefunctionestimate}) and Lemma \ref{lem:lookingfromfront} that $|Z_{n}|$ is at most $\max\sset{\ell_{a},\ell_{b},1}\cdot\dwordS(1,Z_{n})+1$ and finally obtain by the same argument as above
\begin{equation*}
  \circled{1}=\frac{\ln\left(2\cdot|Z_{n}|^{2}+1\right)}{n}\leq\frac{\ln\left(2\cdot(\max\sset{\ell_{a},\ell_{b},1}\cdot\dwordS(1,Z_{n})+1)^{2}+1\right)}{n}\,\xrightarrow[\text{\raisebox{.5ex}{a.\,s.}}]{~n\to\infty~}\,0\,.\
\end{equation*}
On the other hand, concerning \circled{2}, we apply (\ref{eqn:gaugefunctionestimate}) and Lemma \ref{lem:zerodrifthyp} to obtain \renewcommand\qedsymbol{$\square$}
\begin{equation*}
  \circled{2}=\frac{|Z_{n}|+2}{n}\leq\frac{\max\left\{\,\dhyp(\pi_{\mathbb{H}}(1),\pi_{\mathbb{H}}(Z_{n})),\sqrt{\dwordS(1,Z_{n})}\,\right\}+3}{n}\,\xrightarrow[\text{\raisebox{.5ex}{a.\,s.}}]{~n\to\infty~}\,0\,.\qedhere
\end{equation*}
\end{proof}\pagebreak

\begin{figure}
\begin{center}
  \includegraphics{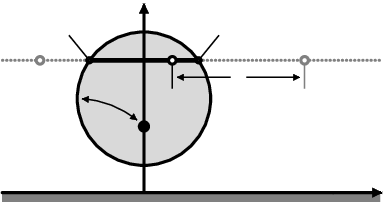}
  \begin{picture}(0,0)
    \put(-114,33){$i$}
    \put(-77,58){$y$}
    \put(-2,65){$L$}
    \put(-89,25){$D$}
    \put(-195,86){$z_{1}=-x+iy$}
    \put(-97,86){$z_{2}=x+iy$}
  \end{picture}
\end{center}
\caption{The horizontal line $L$, the closed disc $D$, and their intersection $L\cap D$.}
\label{fig:shortline}
\end{figure}
 
%
\appendix
\renewcommand{\leftmark}{APPENDIX: THE REMAINING NON-AMENABLE CASES}
\setcounter{section}{3}
\setcounter{rem}{0}
\renewcommand{\thesection}{A}
\section*{Appendix: The remaining non-amenable cases}
\hypertarget{sec:appendix}{}
Recall from Section \ref{sub:aobsg} that a Baumslag--Solitar group $\BS(p,q)$ is non-amenable if and only if neither $|p|=1$ nor $|q|=1$. Until now, we have only identified the Poisson--Furstenberg boundary for random walks on non-amenable Baumslag--Solitar groups $\BS(p,q)$ with $1<p<q$. Replacing one of the two generators by its inverse yields $\BS(p,q)\cong\BS(q,p)\,$ and $\BS(p,q)\cong\BS(-p,-q)$. So, in order to cover the remaining non-amenable cases, it suffices to consider $1<p<-q$ and $1<p=|q|$. Below, we explain how to adjust our methods to obtain similar results for these cases.


\subsection{Action by suitable isometries on the hyperbolic plane}
\label{sub:absiothp}

Assume that $G=\BS(p,q)$ with $1<p<-q$. The definition of the tree $T$ and the level function $\lambda$ remain the same and even Remark~\ref{rem:neighbours} can be adapted, replacing $q$ by $|q|$. Recall that, in Section~\ref{sub:ptthp}, we first constructed the group homomorphism $\pi_{\isom(\mathbb{H})}:G\to\isom(\mathbb{H})$ and then used it to define the projection $\pi_{\mathbb{H}}:G\to\mathbb{H}$. This is precisely what we are going to do again; but, this time, with another isometry $\pi_{\isom(\mathbb{H})}(a)$. Let $\pi_{\isom(\mathbb{H})}:\sset{a,b}\to\isom(\mathbb{H})$ be the map given by $\pi_{\isom(\mathbb{H})}(a):=\big(z\mapsto\frac{|q|}{p}\cdot(-\widebar{z})\big)$ and $\pi_{\isom(\mathbb{H})}(b):=(z\mapsto z+1)$. It follows from von Dyck's theorem that this map can be uniquely extended to a group homomorphism $\pi_{\isom(\mathbb{H})}:G\to\isom(\mathbb{H})$. Now, we define $\pi_{\mathbb{H}}:G\to\mathbb{H}$ by $\pi_{\mathbb{H}}(g):=\pi_{\isom(\mathbb{H})}(g)(i)$. 

\begin{lem}
For every $g\in G$ the point $\pi_{\mathbb{H}}(ga)\in\mathbb{H}$ is above the point $\pi_{\mathbb{H}}(g)\in\mathbb{H}$; the two points have the same real part and their distance in the hyperbolic plane is $\ell_{a}:=\ln\big(\frac{|q|}{p}\big)$. The point $\pi_{\mathbb{H}}(gb)\in\mathbb{H}$ is either to the right or to the left of the point $\pi_{\mathbb{H}}(g)\in\mathbb{H}$ depending on whether the level $\lambda(g)$ is even or odd; in any case, the two points have the same imaginary part and their distance in the hyperbolic plane is $\ell_{b}:=\ln\big(\frac{3+\sqrt{5}\,}{2}\big)$.
\end{lem}

\vspace{-10pt}\begin{proof}[Proof sketch]
The proof is similar to the one of Lemma \ref{lem:lookingfromfront}, and we shall only address the differences. The points $\pi_{\mathbb{H}}(gb)\in\mathbb{H}$ and $\pi_{\mathbb{H}}(g)\in\mathbb{H}$ are obtained by evaluating $\pi_{\isom(\mathbb{H})}(g)$ at $\pi_{\mathbb{H}}(b)\in\mathbb{H}$ and $\pi_{\mathbb{H}}(1)\in\mathbb{H}$. Again, the image $\pi_{\isom(\mathbb{H})}(g)$ is a composition of $\pi_{\isom(\mathbb{H})}(a^{\pm1})$ and $\pi_{\isom(\mathbb{H})}(b^{\pm1})$. While $\pi_{\isom(\mathbb{H})}(b^{\pm1})$ are translations, each occurrence of $\pi_{\isom(\mathbb{H})}(a^{\pm1})$ yields both a dilation and a reflection at the imaginary axis. This implies that the point $\pi_{\mathbb{H}}(gb)\in\mathbb{H}$ is to the right of the point $\pi_{\mathbb{H}}(g)$ if and only if the number of occurrences of $\pi_{\isom(\mathbb{H})}(a^{\pm1})$ is even, which is the case if and only if $\lambda(g)$ is even. 
\end{proof}

Using this projection, and replacing $q$ by $|q|$ wherever it is necessary, we may repeat most of the arguments from Section \ref{sec:identification}. For example, the definitions of the imaginary part $A_{g}$ and the real part~$B_{g}$ now yield the equation $\ln(A_{g})=\ln\big(\frac{|q|}{p}\big)\cdot\lambda(g)$. In order to identify the Poisson--Furstenberg boundary geometrically, we first showed that the pointwise projections of the random walk to $\mathbb{H}$ and $T$ converge a.\,s.~to random elements in the respective boundaries.

While the proof of Lemma \ref{lem:positivedrifthyp} for $\delta>0$ can be adapted, the one of Lemma \ref{lem:negativedrifthyp} for $\delta<0$ requires some additional work. We have to show that the real parts $B_{Z_{n}}$ converge a.\,s.~to a random element $r\in\mathbb{R}$. In the original proof, we used that $A_{Z_{n}}=A_{X_{1}}\cdot\dotsc\cdot A_{X_{n}}$ and $B_{Z_{n}}=\sum_{k=1}^{n}C_{k}$ with $C_{k}:=A_{X_{1}}\cdot\dotsc\cdot A_{X_{k-1}}\cdot B_{X_{k}}$. The first formula for $A_{Z_{n}}$ remains true. However, the second one for $B_{Z_{n}}$ does not because we are now in a situation where not only the scaling but also the direction of the next horizontal increment depends on the current level. Instead, we obtain that $B_{Z_{n}}=\sum_{k=1}^{n}C_{k}$ with $C_{k}:=\varepsilon_{X_{1}}\cdot A_{X_{1}}\cdot\dotsc\cdot\varepsilon_{X_{k-1}}\cdot A_{X_{k-1}}\cdot B_{X_{k}}$ where $\varepsilon_{g}:=1$ if $\lambda(g)$ is even and $\varepsilon_{g}:=-1$ if $\lambda(g)$ is odd. This allows us to apply Cauchy's root test as in the proof of Lemma \ref{lem:negativedrifthyp}. Moreover, the proofs of Lemmas \ref{lem:zerodrifthyp} and \ref{lem:elie} for $\delta=0$ can also be adapted because the estimates are not in terms of the actual horizontal increments but only of their absolute values.

Concerning the boundary $\partial T$, it suffices to observe that the proof of Lemma \ref{lem:anytree} only requires the property that the subgroup $\varphi(G)\leq\Aut(T)$ acts transitively on $T$ and does not fix an end, which is always the case unless $|p|=1$ or $|q|=1$. Therefore, it still shows that the projections $\pi_{T}(Z_{n})$ converge a.\,s.~to a random end in~$\partial T$. As in Lemma \ref{lem:minimal}, we can show that the $G$-actions on $\partial T$ and $\partial T\times\mathbb{R}$ are topologically minimal, whence the hitting measures~$\nu_{\partial T}$ and $\nu_{\partial T\times\mathbb{R}}$ are non-atomic and have full support. This allows us to adapt the proof of Theorem \ref{thm:identification} and to obtain the following version of the~identification theorem.

\begin{thm}[``identification theorem'' for $\mathbf{1<p<-q}$]
Let $Z=(Z_{0},Z_{1},\dotsc)$ be a random walk on a non-amenable Baumslag--Solitar group $G=\BS(p,q)$ with $1<p<-q$ and increments $X_{1},X_{2},\dotsc$ of finite first moment. Depending on the vertical drift $\delta$, we distinguish three cases: 
\begin{enumerate}
\item If $\delta>0$, then the Poisson--Furstenberg boundary is isomorphic to $(\partial T ,\mathcal{B}_{\partial T},\nu_{\partial T})$ endowed with the boundary map $\bnd_{\partial T}:\Omega\to\partial T$.
\item If $\delta<0$, then it is isomorphic to $(\partial T\times\mathbb{R},\mathcal{B}_{\partial T\times\mathbb{R}},\nu_{\partial T\times\mathbb{R}})$ endowed with $\bnd_{\partial T\times\mathbb{R}}:\Omega\to\partial T\times\mathbb{R}$.
\item If $\delta=0$ and $\ln(A_{X_{1}})$ has finite second moment and there is an $\varepsilon>0$ such that $\ln(1+|B_{X_{1}}|)$ has finite $(2+\varepsilon)$-th moment, then it is isomorphic to~$(\partial T ,\mathcal{B}_{\partial T},\nu_{\partial T})$ endowed with $\bnd_{\partial T}:\Omega\to\partial T$.
\end{enumerate}  
\end{thm}


\subsection{Action by isometries on the Euclidean plane}

Let us now assume that $G=\BS(p,q)$ with $1<p=|q|$. Again, the definition of the tree $T$ and the level function $\lambda$ remain the same and Remark \ref{rem:neighbours} can be adapted. However, the situation differs fundamentally from the ones discussed so far because each brick, see \circled{1} and \circled{2} in Figure~\ref{fig:hyperbolic}, would now have equally many edges on its upper and lower level. Therefore, we shall use the Euclidean plane $\mathbb{R}^{2}$ instead of the hyperbolic plane~$\mathbb{H}$. In order to construct a projection $\pi_{\mathbb{R}^{2}}:G\to\mathbb{R}^{2}$, consider the map $\pi_{\isom(\mathbb{R}^{2})}:\sset{a,b}\to\isom(\mathbb{R}^{2})$ given by
\begin{equation*}
  \pi_{\isom(\mathbb{R}^{2})}(a):=\left\{\begin{array}{ll}((x,y)\mapsto(x,y+1))&\text{if}~q>0 \\[2pt] ((x,y)\mapsto(-x,y+1))&\text{if}~q<0 \end{array}\right.\quad\text{and}\quad\pi_{\isom(\mathbb{R}^{2})}(b):=((x,y)\mapsto(x+1,y))\,.
\end{equation*}
In both cases, $q>0$ and $q<0$, we may apply von Dyck's theorem to extend the map uniquely to a group homomorphism $\pi_{\isom(\mathbb{R}^{2})}:G\to\isom(\mathbb{R}^{2})$. Now, we define $\pi_{\mathbb{R}^{2}}:G\to\mathbb{R}^{2}$ by $\pi_{\mathbb{R}^{2}}(g):=\pi_{\isom(\mathbb{R}^{2})}(g)(0,0)$. Note that, instead of the discrete hyperbolic plane, we obtain a discrete Euclidean plane $\Gamma_{v}$.

We want to show that, as soon as the projections converge to a random element in $\partial T$, independently of the vertical drift, the Poisson--Furstenberg boundary is isomorphic to $(\partial T ,\mathcal{B}_{\partial T},\nu_{\partial T})$. In particular, we do not need to introduce any boundary to capture the behaviour of the projections $\pi_{\mathbb{R}^{2}}(Z_{n})$.

Even though the action of the group $G$ on the tree $T$ is not faithful anymore, the proof of Lemma~\ref{lem:anytree} still shows that the projections $\pi_{T}(Z_{n})$ converge a.\,s.~to a random end in $\partial T$. As in the first assertion of Lemma \ref{lem:minimal}, we can show that the $G$-action on $\partial T$ is topologically minimal, whence the hitting measure~$\nu_{\partial T}$ is non-atomic and has full support. This finally allows us to prove the following version of the identification theorem.

\begin{thm}[``identification theorem'' for $\mathbf{1<p=|q|}$]
Let $Z=(Z_{0},Z_{1},\dotsc)$ be a random walk on a non-amenable Baumslag--Solitar group $G=\BS(p,q)$ with $1<p=|q|$ and increments $X_{1},X_{2},\dotsc$ of finite first moment. Then, the Poisson--Furstenberg boundary is isomorphic to $(\partial T ,\mathcal{B}_{\partial T},\nu_{\partial T})$ endowed with the boundary map $\bnd_{\partial T}:\Omega\to\partial T$.
\end{thm}

\vspace{-10pt}\begin{proof}[Proof sketch.]
As in the proof of Theorem \ref{thm:identification}, we take the $\mu$-boundary $(\partial T ,\mathcal{B}_{\partial T},\nu_{\partial T})$ and the $\check{\mu}$-boundary $(\partial T ,\mathcal{B}_{\partial T},\hspace*{0.9pt}\check{\hspace*{-0.9pt}\nu}_{\partial T})$. Then, we define gauges
\begin{equation*}
  \mathcal{G}_{k}:=\left\{\,g\in G\,:\,\dtree(\pi_{T}(1),\pi_{T}(g))\leq k~\text{and}~\deucl(\pi_{\mathbb{R}^{2}}(1),\pi_{\mathbb{R}^{2}}(g))\leq k\right\}\,.
\end{equation*}
Again, $\hspace*{0.9pt}\check{\hspace*{-0.9pt}\nu}_{\partial T}\otimes\nu_{\partial T}$-almost every pair of points $(\xi_{-},\xi_{+})\in\partial T \times\partial T$ has distinct ends $\xi_{-},\xi_{+}\in\partial T$, which we may connect by a doubly infinite reduced path $v:\mathbb{Z}\to T$. Let $\mathcal{S}(\xi_{-},\xi_{+})$ be the full $\pi_{T}$-preimage of the path, i.\,e.~the set consisting of all $g\in G$ such that $\pi_{T}(g)$ is contained in $v(\mathbb{Z})$. To all remaining pairs we assign the whole of $G$ as a strip. This way, the map $\mathcal{S}$ becomes measurable, $G$-equivariant, and satisfies the inequality of Remark \ref{rem:removeg}. Now, pick an arbitrary pair $(\xi_{-},\xi_{+})\in\partial T \times\partial T$ with distinct ends $\xi_{-},\xi_{+}\in\partial T$. We claim that
\begin{equation*}
  \frac{1}{n}\cdot\ln\left(\card\left(\mathcal{S}(\xi_{-},\xi_{+})\cap\mathcal{G}_{|Z_{n}|}\right)\right)\leq\frac{\ln\big((2\cdot|Z_{n}|+1)\cdot(2\cdot|Z_{n}|+1)\big)}{n}\,.
\end{equation*}
Indeed, the strip $\mathcal{S}(\xi_{-},\xi_{+})$ intersects the gauge $\mathcal{G}_{|Z_{n}|}$ in at most $2\cdot|Z_{n}|+1$ many cosets from $G/B$, and each of them contains at most $2\cdot|Z_{n}|+1$ many elements of the gauge. Now, it suffices to consider the standard generating set $S:=\sset{a,b}\subseteq G$ and to observe that $|Z_{n}|\leq\dwordS(1,Z_{n})+1$. Then, as in the proof of Theorem~\ref{thm:identification}, we may use the fact that $\frac{1}{n}\cdot\dwordS(1,Z_{n})$ is a.\,s.~bounded and conclude that
\begin{equation*}
  \dotsc=\frac{\ln\big((2\cdot|Z_{n}|+1)^{2}\big)}{n}\leq\frac{\ln\big((2\cdot\dwordS(1,Z_{n})+3)^{2}\big)}{n}\,\xrightarrow[\text{\raisebox{.5ex}{a.\,s.}}]{~n\to\infty~}\,0\,,
\end{equation*}
which allows us to apply the strip criterion.
\end{proof}


\renewcommand{\leftmark}{REFERENCES}

\small

\bibliographystyle{amsalpha}

\providecommand{\bysame}{\leavevmode\hbox to3em{\hrulefill}\thinspace}

 \medskip

\textsc{Johannes Cuno, D\'epartement de math\'ematiques et applications, \'Ecole normale sup\'erieure, CNRS, PSL Research University, 45 rue d'Ulm, 75005 Paris, France} --- \texttt{johannes.cuno@ens.fr} \medskip \\
\textsc{Ecaterina Sava-Huss, Institute of Discrete Mathematics, Graz University of Technology, \\ Steyrergasse 30\,/\,III, 8010 Graz, Austria} --- \texttt{sava-huss@tugraz.at}

\end{document}